\newcommand{\M}[0]{\mathcal{M}}
\newcommand{\N}[0]{\mathbb{N}}
\newcommand{\onabla}[0]{\overline{\nabla}}
\newcommand{\V}[0]{\mathcal{V}}
\newcommand{\R}[0]{\mathbb{R}}
\renewcommand{\S}[0]{\mathcal{S}}
\newcommand{\bj}{\bm{j}}
\newcommand{\w}[0]{\omega}
\newcommand{\norm}[1]{\lVert #1 \rVert}
\newcommand{\dd}{\mathrm{d}}
\newcommand{\Rddiag}{{\R^{2d}_{\!\scriptscriptstyle\diagup}}}
\newcommand{\cMtv}{\mathcal{\M}_{\mathrm{TV}}}
\DeclareMathOperator{\TV}{{TV}}
\DeclareMathOperator{\AC}{AC}
\newcommand{\ACt}{\mathcal{AC}_T}
\DeclareMathOperator{\NCET}{NCE_\mathrm{T}}
\DeclareMathOperator*{\esssup}{ess\,sup}
\DeclareMathOperator*{\supp}{supp}
\newcommand{\Rd}[0]{\R^d}
\newtheorem{definition}{Definition}[section]
\newtheorem{proposition}{Proposition}[section]
\newtheorem{lemma}{Lemma}[section]
\newtheorem{theorem}{Theorem}[section]
\newtheorem{remark}{Remark}[section]
\newtheorem{example}{Example}[section]
\newenvironment{listi}
{\begin{list}
    {(\roman{broj})}
    { \usecounter{broj}}
    \setlength{\labelwidth}{30pt}
    \addtolength{\itemsep}{2pt}}
  {   \end{list} }
\newcounter{broj}
\numberwithin{equation}{section}
\title{On evolution PDEs on co-evolving graphs}
\author{Antonio Esposito \and L\'aszl\'o Mikol\'as}
\address{Mathematical Institute, University of Oxford, Woodstock Road, Oxford, OX2 6GG, United Kingdom.}
\email{antonio.esposito@maths.ox.ac.uk}
\email{laszlo.mikolas@maths.ox.ac.uk}
\begin{document}

\begin{abstract}
We provide a well-posedness theory for a class of nonlocal continuity equations on co-evolving graphs. We describe the connection among vertices through an edge weight function and we let it evolve in time, coupling its dynamics with the dynamics on the graph. This is relevant in applications to opinion dynamics and transportation networks. Existence and uniqueness of suitably defined solutions is obtained by exploiting the Banach fixed-point theorem. We consider different time scales for the evolution of the weight function: faster and slower than the flow defined on the graph. The former leads to graphs whose weight functions depend nonlocally on the density configuration at the vertices, while the latter induces static graphs. Furthermore, we prove a discrete-to-continuum limit for the PDEs under study as the number of vertices converges to infinity.
\end{abstract}

\keywords{Co-evolving graphs, adapting networks, evolution on graphs, discrete-to-continuum}
\subjclass[2020]{35R02, 35R06, 35A01, 35A02}





\maketitle

\section*{Notation} We list notation we shall use throughout the manuscript for reference. 

\begin{itemize}
    \item $A$ denotes a generic subset of $\Rd$.
    \item $\mathcal{B}(A)$: Borel subsets of $A$.
    \item  $\mathcal{M}(A)$: Radon measures on $A$.
    \item $\mathcal{M}^{+}(A)$: nonnegative Radon measures on $A$.
    \item Given $\nu \in \mathcal{M}(\mathbb{R}^d)$ and letting $A \in \mathcal{B}(\mathbb{R}^d)$, we denote by $\nu^{+}(A):=\sup _{B \in \mathcal{B}(A)} \nu(B)$ and $\nu^{-}(A):=-\inf _{B \in \mathcal{B}(A)} \nu(B)$ the upper and lower variation measures of $\nu$; the total variation measure of $\nu$ is $|\nu|(A):=\nu^{+}(A)+\nu^{-}(A)$ and its total variation norm is $\|\nu\|_{\mathrm{TV}}:=|\nu|(\mathbb{R}^d)$.
    \item $\mathcal{M}_{\mathrm{TV}}(A)$ : Radon measures on $A$ with finite total variation.
    \item $\mathcal{M}_{\mathrm{TV}}^{+}(A):=\mathcal{M}^{+}(A) \cap \mathcal{M}_{\mathrm{TV}}(A)$.
    \item $C_0(\Rd)$ is the space of continuous functions on $\Rd$ vanishing at infinity. 
    \item $C_b(\Rd)$ is the space of continuous and bounded functions on $\R^d$.
    \item  $\Rddiag:=\left\{(x, y) \in \mathbb{R}^d \times \mathbb{R}^d: x \neq y\right\}$ is the off-diagonal of $\mathbb{R}^d \times \mathbb{R}^d$.
\item $\mu\in\mathcal{M}^{+}(\mathbb{R}^d)$ is referred to as the base measure and acts as an abstract notion of vertices.
\item $\eta:\Rddiag\to\R$ is the edge weight function.
\item $G$ is the set of edges; i.e., $G=\left\{(x, y) \in \Rddiag: \eta(x, y)\neq 0\right\}$.
\item $\mathcal{V}^{\mathrm{as}}(\Rddiag)$ is the set of antisymmetric vector fields on $\Rddiag$; that is, $\mathcal{V}^{\mathrm{as}}(\Rddiag)=\{v: \Rddiag \rightarrow$ $\left.\mathbb{R}: v(x,y)=-v(y,x)\right\}$.
\item ${\onabla} \phi(x, y)=\phi(y)-\phi(x)$ denotes the nonlocal gradient for $\phi:\R^d\to\R$.
\item $\onabla\cdot j$ is a nonlocal divergence for a flux $j\in\mathcal{M}(\Rddiag)$, cf.~Definition~\ref{def:flux}.
    \item $T$ is a positive and finite final time.
    \item $AC([0,T]; \M_{TV}(\R^d))$ is the space of absolutely continuous curves with respect to $\norm{\cdot}_{TV}$ from $[0,T]$ to $\M_{TV}(\R^d)$.
    \item Given $a \in \R, a_+ := \max \{0,a\}$ and $a_-:=(-a)_{+}$ are its positive and negative parts, respectively.
    \item We shall denote the time dependence by using subscripts. For instance, for a curve $\rho\in AC([0,T]; \M_{TV}(\R^d))$ we use $\rho_t$ to denote an element in $\cMtv(\Rd)$, for any $t\in[0,T]$.
\end{itemize}

\section{Introduction}

In this manuscript we study a class of partial differential equations (PDEs) on graphs whose underlying structure is itself evolving — we shall refer to it as \textit{co-evolving graphs}. More precisely, in contrast with \textit{static graphs}, we allow the link between vertices to change in time, depending on the dynamics on the graph, according to another equation. Our work is motivated by the recent interest in the study of evolution equations on graphs and networks, due to possible applications in several real-world phenomena where individuals interact if they are interconnected in specific ways. In social networks, for example, one can model the spread of opinions, or behaviours, by assigning probabilities for individuals to adopt certain attitudes based on their neighbours' choices. This is useful to model polarisation and formation of echo chambers, cf.~for example~\cite{Baumann_al_PRL_20}. Another possible application concerns transportation networks, where the flux from one vertex to a connected one depends on some scalar quantities at the neighbouring vertices. This process is known as generalised gravity interaction and it gives rise to diffusion-localisation models on networks with nontrivial dynamics, see~\cite{Koike_JSP22} and the references therein. We also mention social norm formation, \cite{Kohne2020}, and biological transport networks,~\cite{Albi2017}. The analysis of dynamic models on co-evolving or adaptive graphs has received an increasing amount of attention in recent years, as it provides a more realistic modelling tool compared to static or dynamic, but non-coupled, network models. Adaptive network models have been used to study a wide range of problems from neuroscience to game theory and economics. We refer the reader to the survey paper~\cite{berner2023adaptive} for an introduction to the field of adaptive networks and their further potential applications.

In this work, we consider a class of nonlocal continuity equations on co-evolving graphs extending the results in~\cite{esposito2021nonlocal,esposito2022class}, where the underlying graph is, instead, static. More precisely, in~\cite{esposito2021nonlocal,esposito2022class} the authors consider equations on the time interval $[0,T]$ of the form
\begin{subequations}
\label{eq:intro-CE-PDE}
\begin{align}\label{eq:intro-CE-PDE-1}
    \partial_t\rho_t + \onabla\cdot \bj_t &= 0 ,
\end{align}
with nonlocal divergence (cf.~Defintion~\ref{def:non_local_grad_div})
$$\onabla\cdot \bj_t(\dd x)=\int_{\R^d\setminus\{x\}}\eta(x,y)\dd \bj_t(x,y),$$
for a time-dependent measure flux, $\bj_t\in\mathcal{M}(G)$, on the set of edges defined by an edge weight function $\eta:\R^d\times\R^d\setminus\{x=y\}\to[0,\infty)$, i.e. $G=\{(x,y)\in\R^d\times\R^d: x\neq y \mbox{ and } \eta(x,y)>0\}$; throughout the article we set $\Rddiag:=\Rd\times\Rd\setminus\{x=y\}$. In particular, points in $\R^d$ are possible vertices and the edges, $G$, are defined through the function $\eta$. Equation~\eqref{eq:intro-CE-PDE-1} describes the time-evolution of a probability measure, $\rho_t$, representing the mass at a vertex $x\in\R^d$, according to a nonlocal continuity equation on the graph. The variation of the mass on each vertex is given as an average of all possible outgoing and ingoing fluxes. We note this is a substantial difference with equations on $\Rd$, where one usually describes the flow of moving particles with a given mass. On graphs, particles are vertices which are fixed, in contrast to the mass which is transported along the edges. In the graph setting, fluxes and velocities, $v$, are defined on the edges, whereas the mass is a vertex-based quantity. Therefore, one needs to consider a suitable interpolation of the mass at the vertices in order to have an edge-based quantity for the mass as well. Hence, on graphs, the relation between flux and velocity can be nonlinear as it depends on the interpolation function chosen, denoted by $\Phi$, as well as on the vertices, which are represented by a measure $\mu\in\mathcal{M}^+(\R^d)$ --- this choice allows to consider finite and infinite graphs in a unified framework: a finite graph can be obtained by choosing $\mu=\mu^n=\sum_{i}^n\delta_{x_i}/n$, for $\{x_1,\dots,x_n\}\subset\R^d$. In view of these considerations,~\eqref{eq:intro-CE-PDE-1} is complemented with the constitutive equation
\begin{align}
\bj=F^\Phi[\mu;\rho,v],\label{eq:intro-CE-PDE-2}
\end{align}
\end{subequations}
for admissible interpolation functions specified in Definition~\ref{def:admissible_interpolation} and Example~\ref{ex:example_interpolations}. In particular, admissible fluxes $F^\Phi$ (see Definition \ref{def:flux}) depend on the interpolation function chosen as in~\eqref{eq:intro-CE-PDE-2}.

In this manuscript, we consider a scenario where the connection of the graph evolves in time, possibly depending on the dynamics of the mass on the edges. Specifically, for $\eta:[0,T]\times (\R^d\times\R^d\setminus\{x=y\})\to \R$, we shall study the following class of systems
\begin{equation}\label{eq:general_co_evolving_NCE}
\begin{split}
&\partial_t \rho_t + \onabla \cdot F^\Phi[\mu, \eta_t ; \rho_t,V_t[\rho_t]]=0, \\
&\partial_t \eta_t (x,y) = H(t,x,y,\eta_t,\rho_t),
\end{split}
\end{equation}
for an admissible flux $F^\Phi$ satisfying Definition~\ref{def:flux}. The velocity field is given as $V:[0,T]\times\cMtv(\R^d)\times\Rddiag \to \V^{\mathrm{as}}(\Rddiag)$ (set of antisymmetric velocity fields), including possible dependence on the mass $\rho$. Among others, a driving example is given by $V_t[\rho_t](x,y)=-\onabla (K*\rho_t)(x,y)$, for $K:\R^d\times\R^d\to\R$ being an interaction potential, studied in~\cite{esposito2021nonlocal,esposito2023graphtolocal,EHPS23}. We shall focus on
\[
H(t,x,y,\eta_t,\rho_t):=\w_t[\rho_t](x,y)-\eta_t(x,y),
\]
where $\w:[0,T]\times \cMtv(\R^d)\times \Rddiag  \to  \R$ is a function satisfying a set of assumptions specified in Section~\ref{sec:well_posed}. With this choice for the function $H$ the evolution of the graph's weights is influenced by two processes. First, the connection between two vertices can depend (in a nonlocal way) on the mass configuration on the graph, as well as on edges $(x,y) \in G$ and time. A possible example for the function $\w$ could be a nonlocal functional on $\cMtv(\R^d)$, depending on the edge considered as well as on time, such as
\begin{equation}\label{eq:example_omega_intro}
\w_t[\sigma](x,y) = \int_{\R^d}K(t,x,y,z)\dd\sigma(z),
\end{equation}
where $K\in C_b([0,T]\times \Rddiag\times\R^d)$ is a general convolution kernel. The second term in the definition $H$ can be thought of as a relaxation effect in time, since, in case the first term is constant, the weight function would converge to a steady state. A similar choice is also considered, e.g., in the recent work~\cite{Burger_kinetic_net22} in a different framework of co-evolving networks.
Under these assumptions, system~\eqref{eq:general_co_evolving_NCE} reads
\begin{equation}\label{eq:ivp}
   \begin{split} 
    \partial_t \rho_t & = - \onabla \cdot F^\Phi[\mu, \eta_t ; \rho_t,V_t[\rho_t]],  
    \\
    \partial_t \eta_t & =  \w_t[\rho_t] -\eta_t, 
    \end{split}\tag{Co-NCL}
\end{equation}
for given initial data $\rho^0 \in \cMtv^M(\R^d)$ and $\eta^0\in C_{b}(\Rddiag)$. The acronym NCL stands for nonlocal conservation law as we allow the velocity field to depend on the solution itself. We add the prefix ``Co'', standing for \say{co-evolving}, to make clear we consider coupled dynamics for the weight function. Differently from previous contributions in the literature, we allow the weight function to become negative and not be symmetric. We analyse different time scales and note that these can lead to weight functions depending on the mass configuration at the vertices, which have not been explored in depth so far, to the best of our knowledge. More precisely, when the graph evolves faster than the mass on the vertices, one obtains $\eta_t=\w_t[\rho]$, leading to a possible further nonlocality in the flux provided by $\eta$, in case of $\omega$ as in~\eqref{eq:example_omega_intro}. In particular, the PDE takes the form
\[
\partial_t \rho_t = - \onabla \cdot F^\Phi[\mu_t, \w_t[\rho_t] ; \rho_t,V_t[\rho_t]].
\]
In this scenario we still talk about \textit{co-evolving} graphs, though edge weights only change according to the mass configuration and not a coupled dynamics. Furthermore, we also provide a discrete-to-continuum limit for solutions of~\eqref{eq:ivp} when the number of vertices tends to infinity. In particular, we rigorously justify the continuum equation on the graph as the many vertices limit of the discrete, finite vertices, model. This is usually referred to as the \textit{graph} or \textit{continuum} limit.

\medskip

Several authors have considered interacting particle systems on graphs to allow for heterogeneous interactions among agents, focusing on the rigorous derivation of the \textit{graph} or \textit{continuum} limit, i.e. finding the limiting dynamics as the number of particles (vertices) goes to infinity. Besides its mathematical relevance, this problem is important in many fields of application such as the study of opinion formation in models of opinion dynamics or transport networks in biology. Among others, we mention~\cite{Ayi_Pouradier_JDE21} where a model of opinion dynamics is considered and the \textit{influence} of a given agent evolves in time depending on the opinion of the others. Related results in this direction are~\cite{paul2022microscopic,porat2023mean, ayi2023graph}. Kinetic equations on co-evolving networks are considered in~\cite{Burger_kinetic_net22}, where the large population limit is studied via Liouville-type equations for the joint measure of the particles as well as the weights of the graph connecting them. Opinion formation on evolving networks is also studied in~\cite{fagioli2023opinion}. We mention the work~\cite{Gkogkas_Kuehn_Xu_CMS}, where the authors study a Kuramoto-type model in a weighted network, whose weights are allowed to depend on the phase of the oscillators. In the previous works the graph is a way of keeping track of the identity or label of different particles and does not bring further structural properties. More precisely, the graph does not determine the state space where particles evolve, but only affects the nature of the interaction among them. From this point of view, particles can be considered as point-masses, i.e. the mass is fixed, but they are allowed to move in space.

In this paper, we consider a different problem in which a weighted graph determines the ambient space, that is, positions in space are fixed, while the mass on the vertices evolves in time along the edges, whose weight can evolve as well. This distinction becomes particularly relevant in applications in data science and machine learning. For example, the popular mean-shift algorithm for clustering tasks can be understood in the framework of a continuity equation on a graph~\cite{craig2021clustering}. In a nutshell, this method attempts to find clusters in the data depending on the density of the distribution of the point cloud through different regions in space. In this setting, positions remain fixed as they represent the position of a given point in a data cloud and this allows to ensure that the mode of the density discovered by the mean-shift algorithm is indeed a data point. We refer the reader to~\cite{LauxLelmi2022+}, for instance, for further graph-based clustering algorithms, with a nonlocal dynamic.

We conclude the introduction reviewing related results in the literature in the static scenario. In~\cite{CHLZ12},~\cite{Maas11}, and~\cite{Mielke2011gradient}, the authors introduced independently the concept of Wasserstein metrics on finite graphs. The nonlinear heat equation on graphs was studied in~\cite{Medv14_SIAM,Medv19}, while a well-posedness theory for the generalised porous medium equation on infinite graphs can be found in~\cite{Bianchi_PME_Graphs_CVPDE}. In~\cite{esposito2021nonlocal}, the focus is on nonlocal dynamics on graphs using an upwind interpolation, while~\cite{esposito2022class} concerns a class of continuity equations on graphs with general interpolations. The analysis of~\cite{esposito2021nonlocal} is extended to two species with cross-interactions as well as nonlinear mobilities and $\alpha$-homogeneous flux-velocity relations for $\alpha > 0$ in~\cite{HPS21,HeinzePiSch-2}. Graphs give rise to interesting discrete-to-continuum problems, such as those considered in~\cite{giga2022graph}, for the total variation flow and the Allen--Cahn flow. Recently, \cite{esposito2023graphtolocal,EHPS23} study graphs as nonlocal approximation of nonlocal interaction equations on graphs. This is linked to discrete-to-continuum evolution problems using tessellations,~\cite{DisserLiero2015,forkertEvolutionaryGammaConvergence2020,hraivoronska2023diffusive,HraivoronskaSchlichtingTse2023}. Structures resembling graphs have been also noticed in collision dynamics in kinetic theory, see~\cite{EspositoGvalaniSchlichtingSchmidtchen2021}. The authors introduce a \textit{nonlocal collision metric} in order to propose a kinetic interpretation of the nonlocal aggregation equation. The underlying state space resembles a graph and the PDE under study takes the form of a local-nonlocal continuity equation. Metric and asymptotic properties of nonlocal Wasserstein distances are considered in~\cite{SlepcevWarren2022}. An interesting property of graphs is that they potentially represent alternative space-discretisations, resembling tessellations for finite volume schemes. Therefore, we point out a natural connection to numerical schemes for gradient flows in the Wasserstein space as in, e.g.,~\cite{BailoCarrilloHu2018,Cances2020,Bailo_etal2020,HraivoronskaSchlichtingTse2023}. We conclude the literature review by mentioning the work~\cite{Falco_PRE22}, 
where a new perspective on the link between random walks on networks and diffusion PDEs is provided. 

\subsection*{Structure of the manuscript} We introduce the set up of the problem in Section~\ref{sec:setup}, including preliminary results. In Section~\ref{sec:well_posed}, we prove well-posedness for~\ref{eq:ivp} by means of a fixed-point argument. Section~\ref{sec:slow_fast_versions} is devoted to the analysis of different time scales for~\eqref{eq:ivp} distinguishing between the graph evolving at a faster or slower rate than the dynamics of the weight function. We conclude the manuscript with a discrete-to-continuum limit for~\eqref{eq:ivp} in Section~\ref{sec:discrete_to_continuum}.

\section{Preliminaries}\label{sec:setup}
A graph is identified by a pair $(\mu,\eta)$: $\mu \in \M^+(\R^d)$ is referred to as \textit{base measure} and $\eta:\Rddiag \to \R$ is the \textit{weight function}, the analogue of the weighted edges for a discrete finite graph. We define the set of edges as $G:=\{(x,y) \in \Rddiag: {\eta(x,y) \neq 0}\}$. More precisely, the pair $(\mu, \eta)$ defines a weighted graph. In previous works, the weight function $\eta$ is sometimes non-negative, whilst in this manuscript we allow for possible negative values. Furthermore, we do not restrict to symmetric weights.

The mass configuration at the vertices is described by a measure with finite total variation, i.e. $\rho\in\cMtv(\R^d)$, as defined in the notation list. We equip the set $\cMtv(\R^d)$ with the total variation norm:
\[
\norm{\sigma}_{TV}=|\sigma|[\Rd]= \sup\bigg\{\langle \varphi, \sigma\rangle \ : \ \varphi \in C_0(\Rd), \ \norm{\varphi}_\infty\leq 1\bigg\} \ ,
\]
for any $\sigma \in \M_{TV}(\Rd)$, where $\langle \varphi, \sigma \rangle:=\int_{\Rd}\varphi\,\dd\sigma $ the dual product between $C_0(\R^d)$ and $\mathcal{M}(\R^d)$. The dynamics we consider preserve the mass, as well as the bound on the total variation, therefore, we shall consider the set
\[
\cMtv^M(\R^d):=\left\{\rho\in\cMtv(\R^d):|\rho|(\R^d)\leq M\right\}.
\]

As we are not in the traditional Euclidean setting, we recall the notion of gradient and divergence for a function defined on graphs, similar to~\cite{esposito2022class}; the only difference is that we do not restrict fluxes on the set $G$, as this is changing in time, but rather consider their (possible) extension to $\Rddiag$, cf.~Remark~\ref{rem:flux_eta_dependent}.
\begin{definition}[Nonlocal gradient and divergence]\label{def:non_local_grad_div}
For any $\phi: \mathbb{R}^d \rightarrow \mathbb{R}$, we define its nonlocal gradient ${\onabla} \phi: \Rddiag \rightarrow \mathbb{R}$ by
$$
{\onabla} \phi(x, y)=\phi(y)-\phi(x), \quad \text { for all }(x, y) \in \Rddiag .
$$
For any Radon measure $\boldsymbol{j} \in \mathcal{M}(\Rddiag)$, its nonlocal divergence ${\onabla} \cdot \boldsymbol{j} \in \mathcal{M}(\mathbb{R}^d)$ is defined as the adjoint of ${\onabla}$, i.e., for any $\phi: \mathbb{R}^d \rightarrow \mathbb{R}$ in $C_0(\R^d)$, there holds
$$
\begin{aligned}
\int_{\mathbb{R}^d} \phi \mathrm{d} {\onabla} \cdot \boldsymbol{j} & =-\frac{1}{2} \iint_\Rddiag {\onabla} \phi(x, y)\mathrm{d} \boldsymbol{j}(x, y) \\
& =\frac{1}{2} \int_{\mathbb{R}^d} \phi(x) \int_{\mathbb{R}^d \backslash\{x\}} (\mathrm{d} \boldsymbol{j}(x, y)-\mathrm{d} \boldsymbol{j}(y, x)) .
\end{aligned}
$$
In particular, for $\boldsymbol{j}$ antisymmetric, that is, $\boldsymbol{j} \in \mathcal{M}(\Rddiag)$ and $\dd\boldsymbol{j}(x,y)=-\dd\boldsymbol{j}(y,x)$, denoted $\boldsymbol{j} \in$ $\mathcal{M}^{\mathrm{as}}(\Rddiag)$, we have
$$
\int_{\mathbb{R}^d} \phi \mathrm{d} {\onabla} \cdot \boldsymbol{j}=\iint_\Rddiag \phi(x)\mathrm{d} \boldsymbol{j}(x, y) \ .
$$
\end{definition}
Solutions of the nonlocal continuity equations are intended as curves defined on a time interval $[0,T]$, for $T>0$. Therefore, we shall denote by $\AC([0,T];\cMtv(\R^d))$ the set of curves from $[0,T]$ to $\cMtv(\R^d)$ absolutely continuous with respect to the $\TV$ norm, that is, the set of curves $\rho\colon [0,T] \to \cMtv(\R^d)$
such that there exists $m\in L^1([0,T])$ with
\begin{equation*}
  \norm{\rho_t-\rho_s}_{\TV} \leq \int_s^t m(r) \dd r, \qquad \text{for all } 0\leq s< t\leq T. 
\end{equation*}
With these preliminary notions we specify the definition of weak solution for the nonlocal continuity equation, referred to as \eqref{eq:nce-expr}. 
\begin{definition}[Weak solution for the NCE]
  \label{def:nce-flux-form}
  A measurable pair of curves $(\rho,\bj)\colon [0,T] \to \cMtv(\R^d)\times \M(\Rddiag)$ is a \textit{weak} solution to the nonlocal continuity equation, denoted as 
  \begin{equation}\tag{NCE}
    \label{eq:nce-expr}
    \partial_t \rho + \onabla \cdot\bj = 0,
  \end{equation}
  provided that, for any $\varphi \in C_0(\R^d)$, it holds:
  \begin{listi}
    \item $(t\mapsto \rho_t) \in \AC([0,T];\cMtv(\R^d))$ ;
    \item $(\bj_t)_{t\in[0,T]}$ is Borel measurable and the map $(t\mapsto \langle \varphi,\onabla\cdot\bj_t\rangle) \in L^1([0,T])$; 
    \item $(\rho,\bj)$ satisfies
    \begin{equation*}
      \int_{\R^d}\varphi(x)\dd \rho_t(x) -  \frac{1}{2}\int_{0}^t \iint_{\Rddiag}\onabla\varphi(x,y) \dd \bj_s(x,y)  \dd s = \int_{\Rd}\varphi(x)\dd \rho_0(x)  \qquad \text{for a.e.~$t \in [0,T]$};
    \end{equation*}
  \end{listi}
in this case, we write $(\rho,\bj)\in\NCET$.
\end{definition}
For the sake of completeness, we specify that absolute continuity of $\rho$ is guaranteed by the integrability of the flux divergence. The nonlocal continuity equation above is the driving dynamics we consider in this work, already considered, e.g., in~\cite{esposito2021nonlocal,esposito2022class}, though the concept of solution is slightly different, as it is given using duality between $C_0$ and signed Radon measures. The latter is more suitable when dealing with the discrete-to-continuum limit (Section~\ref{sec:discrete_to_continuum}).

As discussed in the introduction, in order to specify the evolution on graphs we need to suitably define the flux, $F^\Phi$, which is an edge-based quantity, as well as the velocities. As the mass is solely vertex-based, one has to interpolate the quantities located at the vertices to obtain an edge-based quantity. As a byproduct, the relation between the flux and the velocity can vary, depending on the application. We specify below interpolations and fluxes used in this manuscript, following~\cite{esposito2022class}, allowing for a linear dependence on the weight function, $\eta$, in the flux.
\begin{definition}[Admissible flux interpolation]\label{def:admissible_interpolation}  A measurable function $\Phi: \mathbb{R}^3 \rightarrow \mathbb{R}$ is called an admissible flux interpolation provided that the following conditions hold:
\begin{enumerate}[label=(\roman*)]
    \item\label{ass:interp_deg} $\Phi$ satisfies
$$
\Phi(0,0 ; v)=\Phi(a, b ; 0)=0, \quad \text { for all } a, b, v \in \mathbb{R} ;
$$
 \item\label{ass:interp_lip} $\Phi$ is argument-wise Lipschitz in the sense that, for some $L_{\Phi}>0$, any a, $b, c, d, v, w \in$ $\mathbb{R}$, it holds
$$
\begin{aligned}
|\Phi(a, b ; w)-\Phi(a, b ; v)| & \leq L_{\Phi}(|a|+|b|)|w-v|; 
\\
|\Phi(a, b ; v)-\Phi(c, d ; v)| & \leq L_{\Phi}(|a-c|+|b-d|)|v|;
\end{aligned}
$$
\item $\Phi$ is positively one-homogeneous in its first and second arguments, that is, for all $\alpha>0$ and $(a, b, w) \in \mathbb{R}^3$, it holds
$$
\Phi(\alpha a, \alpha b ; w)=\alpha \Phi(a, b ; w).
$$
\end{enumerate}
\end{definition}

\begin{example}\label{ex:example_interpolations}
In \cite{esposito2021nonlocal}, the authors consider the upwind interpolation given by 
\[
\Phi_{\text {upwind }}(a, b ; w)=a w_{+}-b w_{-}, \qquad \mbox{ for } (a,b,w)\in\R^3.
\]
Another example is provided by the mean multipliers
\[
\Phi_{\text {prod }}(a, b ; w)=\phi(a, b)w, \qquad \mbox{ for } (a,b,w)\in\R^3,
\]
where common choices for $\phi$ include: $\phi(a,b) = \frac{a+b}{2}$, or $\phi(a,b) = \max\{a,b\}$.
\end{example}
The fluxes considered are defined as follows.
\begin{definition}[Admissible flux]\label{def:flux} Let $\Phi$ be an admissible flux interpolation, and let $\rho \in \mathcal{M}_{\mathrm{TV}}(\mathbb{R}^d)$, $w \in \mathcal{V}^{\mathrm{as}}(\Rddiag):=\left\{v: \Rddiag \rightarrow \mathbb{R}| v(x,y)=-v(y,x)\right\}$, and $\eta:\Rddiag\to \R$ measurable. Furthermore, take a reference measure $\lambda \in \mathcal{M}^+(\mathbb{R}^{2 d})$ such that $\rho \otimes \mu, \mu \otimes \rho \ll \lambda$. Then, the admissible flux $F^{\Phi}[\mu, \eta ; \rho, w] \in \mathcal{M}(\Rddiag)$ at $(\rho, w)$ is defined by 
$$
\mathrm{d} F^{\Phi}[\mu, \eta ; \rho, w]=\Phi\left(\frac{\mathrm{d}(\rho \otimes \mu)}{\mathrm{d} \lambda}, \frac{\mathrm{d}(\mu \otimes \rho)}{\mathrm{d} \lambda} ; w\right) \eta\, \mathrm{d} \lambda .
$$
\end{definition}

\begin{remark}\label{rem:flux_eta_dependent}
Note that, in view of the one-homogeneity of $\Phi$, the definition of admissible flux is independent of the choice of $\lambda$ as long as the absolute continuity assumption is satisfied. For instance, one could choose $\lambda=|\rho| \otimes \mu+\mu \otimes|\rho|$. Differently from~\cite{esposito2021nonlocal,esposito2022class}, in this article $\eta$ evolves in time and, consequently, so does the set of edges $G$. For this reason, we include the weight function in the flux, which is now a measure on $\Rddiag$ and not on $G$. We leave to a future investigation possible nonlinear relations between the flux and the weight function $\eta$. 
\end{remark}

As mentioned in the introduction, the evolution of $\eta$ is determined by a function $H:[0,T]\times \Rddiag \times C_b(\Rddiag)\times \cMtv(\R^d) \to \R$ given by
\[
H(t,x,y,\eta_t,\rho_t):=\w_t[\rho_t](x,y)-\eta_t(x,y),
\]
where $\w:[0,T]\times \cMtv(\R^d)\times \Rddiag  \to  \R$ satisfies properties we postpone to Section~\ref{sec:well_posed}.
We focus on the system
\begin{equation}
   \begin{split} 
    \partial_t \rho_t & = - \onabla \cdot F^\Phi[\mu, \eta_t ; \rho_t,V_t[\rho_t]],  
    \\
    \partial_t \eta_t & =  \w_t[\rho_t] -\eta_t, 
    \end{split}\tag{Co-NCL}
\end{equation}
for given initial data $\rho^0 \in \cMtv^M(\R^d)$ and $\eta^0\in C_{b}(\Rddiag)$. A solution to~\eqref{eq:ivp} is intended as follows, complementing Definition~\ref{def:nce-flux-form} with the coupled equation for $\eta$.
\begin{definition}[Solution to the initial value problem \eqref{eq:ivp}]\label{def:sol_to_ivp}
   Given an admissible flux interpolation $\Phi$, a velocity field $V:[0,T]\times \cMtv(\R^d)\times \Rddiag \to \mathcal{V}^{as}(\Rddiag)$, and function $\w:[0,T]\times \cMtv(\R^d)\times \Rddiag \to \R$, a pair $(\rho, \eta): [0,T] \to \cMtv(\R^d)\times  C_{b}(\Rddiag)$ is a solution to the initial value problem \eqref{eq:ivp} if, for any $\varphi \in C_0(\Rd)$, 
   \begin{enumerate}[label=(\roman*)]
       \item $\rho \in AC([0,T], \cMtv(\R^d)),\ \eta \in AC([0,T],  C_{b}(\Rddiag))$;\label{cond:sol_1}
       \item the maps $t \mapsto \langle \varphi,\onabla \cdot F^\Phi[\mu, \eta_t ; \rho_t,V_t[\rho_t]]\rangle$ and $t \mapsto  \w_t[\rho_t] -\eta_t$ belong to $L^1([0,T])$;  \label{cond:sol_2}
       \item for a.e. $t \in [0,T]$, every $(x,y)\in\Rddiag$,  for any $\varphi\in C_0(\R^d)$, the following conditions hold \label{cond:sol_3}
       \begin{align}
           \int_{\R^d}\varphi\dd \rho_t  & = \int_{\Rd}\varphi\dd \rho_0 + \frac{1}{2} \int_{0}^t \iint_{\Rddiag}\onabla\varphi\dd F^\Phi[\mu,\eta_s,\rho_s;V_s[\rho_s]] \dd s\label{eq:rho_evolution}
           \\
           \eta_t(x,y) &= \eta_0(x,y)  + \int_0^t  \left(\w_s[\rho_s](x,y) -\eta_s(x,y)\right) \,\dd s.
           \label{eq:eta_evolution}
       \end{align}
   \end{enumerate}
\end{definition}

In the following, for any curve $\gamma \in C([0,T],S)$, for some normed space $(S, \norm{\cdot}_S)$, we will write 
\[
\norm{\gamma}_{\infty,S}:= \sup_{t \in [0,T]}\norm{\gamma_t}_{S} \ . 
\]
We denote by $\|\cdot\|_\infty$ the usual sup-norm when this does not create confusion.

\subsection{A priori properties}

Our strategy to prove well-posedness for~\eqref{eq:ivp} relies on the application of the Banach fixed-point theorem. In the next proposition we collect properties of solutions important for the definition of the solution map, under assumptions milder than those needed later on the velocity fields.  

\begin{proposition}\label{prop:int_supp_mass_preservation} Let $\Phi$ be an admissible flux interpolation, $\rho_0 \in \cMtv^M(\mathbb{R}^d)$, $\eta_0\in C_b(\Rddiag)$. Assume $\w:[0,T]\times \cMtv(\R^d)\times \Rddiag  \to  \R$ and $V:[0, T]\times \cMtv(\R^d) \times \Rddiag \rightarrow \mathcal{V}^{\mathrm{as}}(\Rddiag)$ satisfy, for some $C_V>0$ and $C_\w>0$,
\begin{subequations}
\begin{equation}\label{eq:assumption_weak_compressibility_V}
\int_0^T \sup_{\rho \in \cMtv(\R^d)} \sup _{x \in \mathbb{R}^d} \int_{\mathbb{R}^d \backslash\{x\}}\left|V_s[\rho](x, y)\right| \mathrm{d} \mu(y)\dd s \leq C_V, 
\end{equation}
\begin{equation}\label{eq:assumption_weak_compressibility_w}
    \int_0^T \sup_{\rho \in \cMtv(\R^d)}\sup_{(x,y) \in \Rddiag}|\w_s[\rho](x,y)| \dd s\le C_\w.
\end{equation}
\end{subequations}
Suppose the map $(x,y)\in\Rddiag  \mapsto \w_t[\sigma](x,y)$ is continuous, for any $t\in[0,T]$ and $\sigma \in \cMtv(\Rd)$. For a pair $(\rho, \eta):[0,T] \to  \cMtv(\R^d) \times C_{b}(\Rddiag)$ satisfying~\ref{cond:sol_3} in Definition~\ref{def:sol_to_ivp}, the following properties hold:
\begin{enumerate}
    \item  for any $\varphi \in C_0(\R^d)$ the maps 
    \begin{align*}
       & t \mapsto \iint_{\Rddiag}\onabla\varphi\dd F^\Phi[\mu,\eta_t,\rho_t;V_t[\rho_t]] \in L^1([0,T]),
       \\
       & t \mapsto  \w_t[\rho_t](x,y)- \eta_t(x,y) \in L^1([0, T]), 
    \end{align*} and $\rho \in L^\infty([0, T] ; \cMtv(\R^d))$, $\eta\in L^\infty([0,T],C_{b}(\Rddiag))$ (flux integrability and time boundedness);
    \item $\rho_t[\mathbb{R}^d]=\rho_0[\mathbb{R}^d]$ for all $t \in[0, T]$ (mass preservation);
    \item $\rho \in AC([0,T], \M^{M}_{TV}(\R^d))$ and $\eta \in AC([0,T], C_b(\Rddiag))$ (absolute continuity);
    \item if $\operatorname{supp} \rho_0 \subseteq \operatorname{supp} \mu$, then $\operatorname{supp} \rho_t \subseteq \operatorname{supp} \mu$ for a.e. $t \in[0, T]$ (support inclusion).
\end{enumerate}
\end{proposition}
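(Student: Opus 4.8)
The plan is to handle the two coupled equations of~\eqref{eq:ivp} in roughly separate passes: first derive a uniform-in-time bound for $\eta$, then use it to control the flux $F^\Phi$, and finally read off the four claims, with Grönwall's inequality as the workhorse throughout. \textbf{A priori bound for $\eta$ and the flux estimate.} Starting from~\eqref{eq:eta_evolution} I would estimate $\abs{\eta_t(x,y)}\le\norm{\eta_0}_\infty+\int_0^t\abs{\w_s[\rho_s](x,y)}\,\dd s+\int_0^t\abs{\eta_s(x,y)}\,\dd s$, take the supremum in $(x,y)$, bound the middle term by $C_\w$ via~\eqref{eq:assumption_weak_compressibility_w}, and close with Grönwall to obtain $\norm{\eta_t}_\infty\le(\norm{\eta_0}_\infty+C_\w)e^{T}$; continuity of $\eta_t$ is inherited from $\eta_0$ and the equibounded continuous integrands $\w_s[\rho_s]$, so $\eta\in L^\infty([0,T];C_b(\Rddiag))$. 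The heart of the matter is the flux bound: using Definition~\ref{def:admissible_interpolation}\ref{ass:interp_deg}--\ref{ass:interp_lip} I get $\abs{\Phi(a,b;w)}=\abs{\Phi(a,b;w)-\Phi(a,b;0)}\le L_\Phi(\abs a+\abs b)\abs w$, and inserting the Radon--Nikodym densities of Definition~\ref{def:flux} while rewriting $\abs a\,\dd\lambda=\dd(\abs{\rho_t}\otimes\mu)$, $\abs b\,\dd\lambda=\dd(\mu\otimes\abs{\rho_t})$ (which renders the choice of $\lambda$ immaterial) yields
\[
\dd\abs{F^\Phi[\mu,\eta_t;\rho_t,V_t[\rho_t]]}\le L_\Phi\norm{\eta_t}_\infty\abs{V_t[\rho_t]}\big(\dd(\abs{\rho_t}\otimes\mu)+\dd(\mu\otimes\abs{\rho_t})\big).
\]
Pairing with $\onabla\varphi$, using $\abs{\onabla\varphi}\le2\norm{\varphi}_\infty$ and the antisymmetry $\abs{V(x,y)}=\abs{V(y,x)}$ to reduce both pieces to $\sup_x\int_{\R^d\setminus\{x\}}\abs{V_t[\rho_t](x,y)}\,\dd\mu(y)$, I arrive at $\abs{\iint\onabla\varphi\,\dd F^\Phi_t}\le C\norm{\varphi}_\infty\norm{\eta_t}_\infty\norm{\rho_t}_{\TV}\sup_x\int\abs{V_t[\rho_t]}\,\dd\mu$. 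Combined with~\eqref{eq:assumption_weak_compressibility_V} and the $\eta$-bound this gives the time-integrability in item~(1), the statement for $\w_t[\rho_t]-\eta_t$ being immediate from~\eqref{eq:assumption_weak_compressibility_w}.

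\textbf{Mass preservation and absolute continuity (items (2)--(3)).} For mass preservation I would test~\eqref{eq:rho_evolution} with cutoffs $\varphi_R\in C_0(\R^d)$ satisfying $\mathbbm{1}_{B_R}\le\varphi_R\le\mathbbm{1}_{B_{2R}}$: as $R\to\infty$ one has $\int\varphi_R\,\dd\rho_t\to\rho_t[\R^d]$, while $\onabla\varphi_R\to0$ pointwise with $\abs{\onabla\varphi_R}\le2$, so dominated convergence (in $(x,y)$ and in $s$, with dominating function the $L^1$ flux bound above) annihilates the flux term, giving $\rho_t[\R^d]=\rho_0[\R^d]$. For absolute continuity, taking the supremum over $\norm{\varphi}_\infty\le1$ in the increment form of~\eqref{eq:rho_evolution} gives $\norm{\rho_t-\rho_s}_{\TV}\le\int_s^t\abs{F^\Phi_r}(\Rddiag)\,\dd r$ with $L^1$ integrand, while $\eta$ is treated directly from~\eqref{eq:eta_evolution}; the same estimate with $s=0$ together with Grönwall keeps $\norm{\rho_t}_{\TV}$ bounded, which is how membership of the curve in $\M^M_{\TV}(\R^d)$ is to be obtained.

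\textbf{Support inclusion (item (4)).} For $\varphi\in C_0(\R^d)$ with $\norm{\varphi}_\infty\le1$ and $\supp\varphi\cap\supp\mu=\emptyset$, splitting the pointwise flux bound into its two pieces and using that $\varphi$ vanishes on $\supp\mu$ collapses both to the mass of $\rho_s$ outside $\supp\mu$, namely $\abs{\iint\onabla\varphi\,\dd F^\Phi_s}\le C\norm{\eta_s}_\infty\big(\sup_x\int\abs{V_s[\rho_s]}\,\dd\mu\big)\int_{(\supp\mu)^c}\abs{\varphi}\,\dd\abs{\rho_s}$. Writing $g(t):=\abs{\rho_t}\big((\supp\mu)^c\big)$, which coincides with the supremum of $\abs{\int\varphi\,\dd\rho_t}$ over such $\varphi$, and using $\int\varphi\,\dd\rho_0=0$, equation~\eqref{eq:rho_evolution} yields $g(t)\le\int_0^t h(s)g(s)\,\dd s$ with $h\in L^1([0,T])$ and $g(0)=0$; Grönwall forces $g\equiv0$, that is $\supp\rho_t\subseteq\supp\mu$.

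\textbf{Main obstacle.} The one step requiring genuine care is the flux estimate: the argument-wise Lipschitz bound, one-homogeneity of $\Phi$, and the antisymmetry of $V$ must be combined so that the vertex-integrated velocity matches \emph{exactly} the quantity appearing in~\eqref{eq:assumption_weak_compressibility_V}; once this is secured, mass preservation, absolute continuity, and support inclusion are all consequences of dominated convergence and Grönwall's inequality. The only remaining delicate point I anticipate is reconciling the $\TV$-bound in item~(3) with the precise constant $M$ fixing the fixed-point space.
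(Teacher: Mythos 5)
Your proposal is correct and follows essentially the same route as the paper's proof: Gr\"onwall for the a priori $\eta$-bound, the Lipschitz/one-homogeneity estimate on $\Phi$ (with the polar decomposition turning the densities of $\rho_t\otimes\mu$, $\mu\otimes\rho_t$ into $|\rho_t|\otimes\mu$, $\mu\otimes|\rho_t|$) combined with antisymmetry of $V$ and \eqref{eq:assumption_weak_compressibility_V} for the flux bound and the $\TV$-Gr\"onwall estimate, cutoffs plus dominated convergence for mass preservation, flux integrability for absolute continuity, and test functions supported away from $\supp\mu$ plus Gr\"onwall for support inclusion. The only cosmetic deviations are your plateau cutoffs in place of the paper's Gaussians $e^{-|x|^2/R^2}$ and the placement of the Gr\"onwall bound on $\|\rho_t\|_{\TV}$ under item (3) rather than item (1); even the delicate point you flag about staying in $\M^M_{\TV}(\R^d)$ is left equally implicit in the paper's own proof.
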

\begin{proof}
\textit{Time boundedness and flux integrability} -  We start proving these properties hold for $\eta$. From~\eqref{eq:eta_evolution} we have
\begin{align*}
     |\eta_t(x,y)| & \leq |\eta_0(x,y)| + \int_0^t  (|\w_s[\rho_s](x,y)| + |\eta_s(x,y)|)\dd s,
\end{align*}
whence 
\[
 \|\eta_t\|_{\infty} \leq \|\eta_0\|_{\infty} + \int_0^t\sup_{\rho \in \cMtv(\R^d)}\sup_{(x,y)\in\Rddiag}|\w_s[\rho](x,y)|\dd s + \int_0^t \|\eta_s\|_{\infty}\dd s.
\]

Then, an application of Gr\"onwall's inequality yields 
\begin{align}\label{eq:eta_time_bound}
   \|\eta_t\|_{\infty} & \leq \left(\|\eta_0\|_{\infty} + \int_0^T \sup_{\rho \in \cMtv(\R^d)}\sup_{(x,y) \in \Rddiag}|\w_s[\rho](x,y)| \dd s\right) e^{T}.
\end{align}
By taking the supremum for $t\in[0,T]$, using assumption~\eqref{eq:assumption_weak_compressibility_w} on $\w$, yields the result for $\eta$, meaning $\eta \in L^ \infty([0,T],C_{b}(\Rddiag))$. Next, we prove an analogous result for $\rho$.

\noindent
By definition of admissible flux and nonlocal divergence, for any $\varphi \in C_0(\Rd) $, we have
\begin{align*}
   \langle\varphi,\onabla \cdot F^{\Phi}\left[\mu,\eta_t ; \rho_t, V_t[\rho_t]\right]\rangle&=-\frac{1}{2} \iint_{\Rddiag} \onabla \varphi(x,y) \mathrm{d} F^{\Phi}\left[\mu, \eta_t ; \rho_t, V_t[\rho_t]\right](x, y) 
   \\
   &= - \frac{1}{2} \iint_{\Rddiag} \onabla \varphi(x,y) \Phi\left(\frac{\mathrm{d}\left(\rho_t \otimes \mu\right)}{\mathrm{d} \lambda}, \frac{\mathrm{d}\left(\mu \otimes \rho_t\right)}{\mathrm{d} \lambda} ; V_t[\rho_t]\right) \eta_t(x,y) \mathrm{d} \lambda(x,y) .
\end{align*}
Using~\ref{ass:interp_deg},~\ref{ass:interp_lip} in Definition~\ref{def:admissible_interpolation},~\cite[Theorem 6.13]{rudin1987complex}, the boundedness of $\eta$ and the antisymmetry of the velocity field, we have, for any $\varphi \in C_0(\R^d)$ and $t\in[0,T]$:
\begin{align}\label{eq:flux_bound}
\int_0^t\left|\langle\varphi,\onabla \cdot F^{\Phi}\left[\mu,\eta_s ; \rho_s, V_s[\rho_s]\right]\rangle\right| \mathrm{d} s & \leq \frac{L_{\Phi}\|\eta\|_{\infty}}{2}\int_0^t \!\!\iint_\Rddiag |\onabla\varphi|\left|V_s[\rho_s]\right| \left(\mathrm{d}\left|\rho_s\right| \otimes \mu+\mathrm{d} \mu \otimes\left|\rho_s\right|\right) \mathrm{d} s \nonumber 
\\
& \leq L_{\Phi}\|\eta\|_{\infty}\norm{\varphi}_\infty \int_0^t \iint_\Rddiag\left|V_s[\rho_s](x,y)\right| \mathrm{d} \mu(y) \mathrm{d}\left|\rho_s\right|(x) \mathrm{d} s \nonumber 
\\
& \leq L_{\Phi}\|\eta\|_{\infty}\norm{\varphi}_\infty \int_0^t \overline{v}_s\left|\rho_s\right|[\mathbb{R}^d] \mathrm{d} s,
\end{align}
where we set $\overline{v}_s:=\sup_{\rho \in \cMtv(\R^d)}\sup_{x\in\R^d}\int_{\R^d\setminus\{x\}}\left|V_s\left[\rho_s\right](x,y)\right|\dd\mu(y)$. From \eqref{eq:rho_evolution} in Definition \ref{def:sol_to_ivp}, \eqref{eq:flux_bound}, and the definition of the total variation norm, we infer 
\begin{align*}
    |\rho_t|[\R^d]&\leq  |\rho_0|[\R^d] + L_\Phi \|\eta\|_{\infty}\int_0^T \overline{v}_s|\rho_s|[\R^d] \dd s.
\end{align*}
An application of Gr\"onwall's inequality and assumption \eqref{eq:assumption_weak_compressibility_V} provide
\begin{equation}\label{eq:rho_time_bound}
    |\rho_t |[\R^d] \leq  |\rho_0 |[\R^d] e ^{L_\Phi\|\eta\|_{\infty} C_V} < \infty.
\end{equation}
Hence, we can conclude the time integrability of the flux and that $\rho \in L^\infty([0,T],\M_{TV}(\R^d))$.

\textit{Mass preservation} - Let $\varphi^{R}(x):= e^{-|x|^2/R^2}$. Note that $\varphi^{R} \in C^\infty_0(\Rd)$ and $\varphi^R(x) \to 1$ pointwise as $R\to \infty$. Using Definition~\ref{def:sol_to_ivp}~\ref{cond:sol_3}, we infer
\begin{align}\label{eq:last_change}
    \bigg|\int_{\Rd}\varphi^R(x) \dd \rho_t(x) & - \int_{\Rd}\varphi^R(x) \dd \rho_0(x) \bigg|\nonumber
    \\
    &\leq \frac{1}{2}\int_0^t\bigg|\iint_{ \Rddiag}(\varphi^R(y)-\varphi^R(x))\eta_s\Phi\left(\frac{\mathrm{d}\left(\rho_t \otimes \mu\right)}{\mathrm{d} \lambda}, \frac{\mathrm{d}\left(\mu \otimes \rho_t\right)}{\mathrm{d} \lambda} ; V_t[\rho_t]\right)\mathrm{d} \lambda(x,y)\bigg|\dd s\nonumber
    \\
    & \leq L_\Phi \norm{\eta}_\infty \int_0^t \iint_{ \Rddiag}|\varphi^R(y)-\varphi^R(x)|\,|V_s[\rho_s]|\dd|\rho_s|(x)\dd\mu(y)\dd s.
\end{align}
The previous integrals can be bounded as follows:
\begin{align*}
  \int_0^t \iint_{ \Rddiag}|\varphi^R(y)-\varphi^R(x)|\,|V_s[\rho_s]|\dd|\rho_s|(x)\dd\mu(y)\dd s &\leq 2\int_0^t \iint_{ \Rddiag}|V_s[\rho_s]|\dd|\rho_s|(x)\dd\mu(y)\dd s
  \\
  & \leq 2 \int_0^t \overline{v}_s |\rho_s|[\R^d] \dd s
  \\
  & \leq 2C_V |\rho_0 |[\R^d] e ^{L_\Phi\|\eta\|_{\infty} C_V}T
\end{align*}
where in the last inequality we used the bound \eqref{eq:rho_time_bound}. Hence, by means of the Dominated Convergence Theorem, as we let $R\to \infty$ in~\eqref{eq:last_change}, we obtain 
\[
\bigg|\int_{\Rd}\varphi^R(x) \dd \rho_t(x) - \int_{\Rd}\varphi^R(x) \dd \rho_0(x) \bigg| \longrightarrow 0 \text{ as } R\to \infty \ .
\]
Thus, noting that $\rho_t[\Rd]=\lim_{R \to \infty}\int_{\Rd}\varphi^R(x)\dd\rho_t(x)$, which follows again from the Dominated Convergence theorem, as well as $\rho_0[\Rd]=\lim_{R \to \infty}\int_{\Rd}\varphi^R(x)\dd\rho_0(x)$, we conclude the proof.

\textit{Absolute continuity} - This is a direct consequence of the flux integrability proven by exploiting~\eqref{eq:rho_evolution}~and~\eqref{eq:eta_evolution}.

\textit{Support inclusion for $\rho$} -  Let $A=\Rd \setminus \supp \mu$ and  $(\rho_t)_{t\in[0,T]}$ be a solution to \eqref{eq:rho_evolution}. As we want to evaluate $|\rho_t|(A)$, we consider test functions $\varphi\in C_c(A)$, due to~\cite[Proposition 1.47]{ambrosio2000functions}. Using~\eqref{eq:rho_evolution}, we estimate:

\begin{align*}
    \bigg|\int_{A}\varphi(x) \dd \rho_t(x) \bigg| &\leq  \bigg|\int_{A}\varphi(x) \dd \rho_0(x) \bigg|
    \\
    &\quad +\frac{L_\Phi}{2}\int_0^t \iint_{ \Rddiag \cap\operatorname{supp} \mu \times  A}|\varphi (y)|\, |\eta_s|\,|V_s[\rho_s](x,y)|\dd\mu(x)\dd|\rho_s|(y) 
    \\
    & \quad +\frac{L_\Phi}{2}\int_0^t \iint_{ \Rddiag \cap A \times \operatorname{supp} \mu }|\varphi (x)|\, |\eta_s|\, |V_s[\rho_s](x,y)|\dd|\rho_s|(x) \dd\mu(y)
    \\
    &\leq \bigg|\int_{A}\varphi(x) \dd \rho_0(x) \bigg|+L_\Phi\norm{\eta}_\infty\int_0^t \iint_{\Rddiag \cap A \times \operatorname{supp} \mu }|\varphi (x)|\,  |V_s[\rho_s](x,y)|\dd|\rho_s|(x) \dd\mu(y)
    \\
    & \leq \bigg|\int_{A}\varphi(x) \dd \rho_0(x) \bigg|+L_\Phi\norm{\eta}_\infty\int_0^t \int_{A}|\varphi(x)|\dd |\rho_s|(x) \overline{v}_s\dd s\ .
\end{align*}
Taking the supremum over all $\varphi \in C_c(A)$ such that $\norm{\varphi}_\infty\leq1$ we infer, by~\cite[Proposition 1.47]{ambrosio2000functions} and Gr\"{o}nwall's inequality, that 
\begin{align*}
    |\rho_t|[A] \leq e^{C_V L_\Phi\|\eta\|_{\infty}}|\rho_0|[A]=0
\end{align*}
by definition of $A$. 
\end{proof}

\section{Well-posedness of the co-evolving non-local continuity equation}\label{sec:well_posed}

In this section we show that the system \eqref{eq:ivp} is well-posed by means of a Banach  fixed-point argument. From now on we fix $\rho_0\in\cMtv^M(\Rd)$, $\eta_0\in C_b(\Rddiag)$, and assume $\Phi$ is an admissible flux interpolation according to Definition~\ref{def:admissible_interpolation}. Let us denote
\[
\ACt:=AC([0,T]; \M_{TV}^M(\R^d)) \times AC([0,T]; C_b(\Rddiag)),
\]
and equip this space with the metric defined by 
\[
d_\infty((\rho^1,\eta^1),(\rho^2,\eta^2)) := \| \rho^1-\rho^2\|_{\infty,\M_{TV}(\R^d)} + \|\eta^1-\eta^2 \|_{\infty, C_b(\Rddiag)},
\]
where
\begin{align*}
    &\| \rho^1-\rho^2\|_{\infty,\M_{TV}(\R^d)}=\sup_{t\in[0,T]}\|\rho_t^1-\rho^2_t\|_{\TV},\\
    &\|\eta^1-\eta^2 \|_{\infty,C_b(\Rddiag)}=\sup_{t\in[0,T]}\|\eta^1_t-\eta^2_t\|_{\infty}.
\end{align*}

 We assume the velocity field $V:[0,T]\times \M_{TV}^M(\R^d) \times \Rddiag \to \V^{\mathrm{as}}(\Rddiag)$ to fulfil, for a constant $C_V>0$, the uniform compressibility assumption
\begin{equation}\label{eq:velocity_bound}
  \sup _{t \in[0, T]} \sup _{\rho \in \M_{TV}^M(\mathbb{R}^d)} \sup _{x \in \mathbb{R}^d} \int_{\mathbb{R}^d \backslash\{x\}}\left|V_t[\rho](x, y)\right| \mathrm{d} \mu(y)  \leq C_V.
\end{equation}
Assumption~\eqref{eq:velocity_bound} is an $L^\infty$ bound for the nonlocal divergence needed later in our argument to obtain a contraction. We note this is a stronger requirement with respect to time, than~\eqref{eq:assumption_weak_compressibility_V}, which was used to obtain the time-continuity of solutions in Proposition~\ref{prop:int_supp_mass_preservation}.

\begin{remark}
    We refer to~\eqref{eq:velocity_bound} as \textit{uniform compressibility} assumption in relation to the terminology used for the continuity equation $\partial_t\rho_t+\nabla\cdot(b_t\rho_t)=0$, for a vector field $b:[0,T]\times\Rd\to\Rd$. In this context an $L^\infty$-bound is required on $\nabla\cdot b$, which is replaced in our framework by~\eqref{eq:velocity_bound}. We refer the reader to~\cite[Remark~3.6 and Section 4]{esposito2022class} for further details. 
\end{remark}
We fix $\w:[0,T]\times \M_{TV}^M(\R^d) \times \Rddiag \to \R$ satisfying the following conditions: 
\begin{enumerate}[label=$(\bm{\w}\arabic*)$]
    \item the map $(x,y)\in\Rddiag  \mapsto \w_t[\cdot](\cdot,x,y)$ is continuous and $(t\mapsto\w_t[\cdot](\cdot,\cdot))\in L^1([0,T])$;\label{ass:w_continuous}
    \item for any $\rho, \sigma \in \cMtv(\R^d)$ there exists a constant $L_\w\geq 0$ such that\label{ass:omega_lip}
     \begin{align*}
         \sup_{t \in [0,T]}\sup_{x,y \in \Rddiag} | \w_t[\sigma](x,y) - \w_t[\rho](x,y)| & \leq L_\w\norm{\sigma - \rho}_{TV}; 
     \end{align*}
     \item $\w$ is bounded, that is, there exists a constant $C_\w>0$ such that \label{ass:omega_bounded}
     \begin{equation*}
    \sup_{t \in [0,T]}\sup_{\rho \in \cMtv(\R^d)}\sup_{x,y \in \Rddiag}\big|\w_t[\rho](x,y)\big| \le C_\w. 
\end{equation*}
\end{enumerate}
We define the solution maps $\S:=(\S_V,\S_\w):\ACt\to \ACt$ by
\begin{subequations}\label{eq:sol_maps}
\begin{align}
    & \S_V(\rho,\eta)(t) := \int_{\Rd}\varphi\dd \rho_0 +  \frac{1}{2}\int_{0}^t \iint_{\Rddiag}\onabla\varphi\dd F^\Phi[\mu,\eta_s,\rho_s;V_s[\rho_s]] \dd s ,
    \\
    &
    \S_\w(\rho,\eta)(t)(x,y):=\eta^0(x,y) + \int_0^t   \w_s[\rho_s](x,y) - \eta_s(x,y) \dd s  .
\end{align}
\end{subequations}
for any  $\varphi \in C_0(\R^d)$, $t \in [0,T]$ and $(x,y) \in \Rddiag$. We observe that the map $\S: \ACt \to \ACt$ is well-defined due to our assumptions on the functions $V$, \eqref{eq:velocity_bound}, and $\w$, \ref{ass:w_continuous} and \ref{ass:omega_bounded}, taking into account Proposition~\ref{prop:int_supp_mass_preservation}. 

\begin{remark}
     In Section 5, we shall compare different solutions $(\rho,\eta),(\widetilde{\rho},\widetilde{\eta})$ of \eqref{eq:ivp}, with potentially different base measures $\mu, \widetilde{\mu}$. Note that we can take $\lambda:= |\rho| \otimes \mu + \mu \otimes |\rho| +|\widetilde{\rho}| \otimes \widetilde{\mu} + \widetilde{\mu} \otimes |\widetilde{\rho}| $ in Definition \ref{def:flux} in order to be able to compare fluxes associated with $(\rho,\eta)$ and $(\widetilde{\rho},\widetilde{\eta})$.
\end{remark}
First, we show the following contraction with respect to $d_\infty$, crucial for the application of the Banach fixed-point theorem. 
\begin{lemma}\label{lemma:contraction}
Let $V:[0,T] \times \M_{TV}^M(\R^d) \times \Rddiag \to \V^{\mathrm{as}}(\Rddiag)$ satisfy assumption \eqref{eq:velocity_bound} and assume there is a constant $L_V \geq 0$ such that, for all $t \in [0,T]$ and all $\rho, \sigma \in \M_{TV}^M(\R^d)$,
\begin{equation}\label{eq:ass_Lip_rho}
         \sup _{x \in \mathbb{R}^d} \int_{\mathbb{R}^d \backslash\{x\}}\left|V_t[\rho](x, y)-V_t[\sigma](x, y)\right| \mathrm{d} \mu(y)  \leq L_V\|\rho-\sigma\|_{\mathrm{TV}} \ .
\end{equation}
Assume $\w$ satisfies assumptions \ref{ass:w_continuous}---\ref{ass:omega_bounded} and that the solution map, $\S$, is defined as in \eqref{eq:sol_maps}. Then, for any $(\rho,\eta), (\widetilde{\rho}, \widetilde{\eta}) \in \ACt$, the following contraction estimate holds
\[
d_\infty\big(\S(\rho,\eta),\S(\widetilde{\rho},\widetilde{\eta})\big) \leq \kappa(T) T d_\infty((\rho,\eta),(\widetilde{\rho}, \widetilde{\eta})),
\]
where $\kappa(T):=\kappa(M,L_\Phi,L_V,C_\w,\|\eta_0\|_{\infty},T)\ge0$. In particular, there exists a $T^*>0$ solving $\kappa(T^*)T^*=1$ such that we have a unique solution $(\rho,\eta)$ to \eqref{eq:ivp} on $[0,T]$, for $T<T^*$, and $(\rho(0),\eta(0)) = (\rho_0,\eta_0)\in \M^M_{TV}(\R^d) \times C_{b}(\Rddiag)$.
\end{lemma}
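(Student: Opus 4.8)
The plan is to bound the two contributions to $d_\infty(\S(\rho,\eta),\S(\widetilde{\rho},\widetilde{\eta}))$ separately, namely $\norm{\S_\w(\rho,\eta)-\S_\w(\widetilde{\rho},\widetilde{\eta})}_{\infty,C_b(\Rddiag)}$ and $\norm{\S_V(\rho,\eta)-\S_V(\widetilde{\rho},\widetilde{\eta})}_{\infty,\M_{TV}(\R^d)}$, and to show each is controlled by a constant times $T\,d_\infty((\rho,\eta),(\widetilde{\rho},\widetilde{\eta}))$. The weight component is the easier one: subtracting the two expressions for $\S_\w$ in \eqref{eq:sol_maps}, the initial data cancel and one is left with the time integral of $(\w_s[\rho_s]-\w_s[\widetilde{\rho}_s])-(\eta_s-\widetilde{\eta}_s)$. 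Taking the supremum over $(x,y)$ and invoking the Lipschitz assumption \ref{ass:omega_lip} gives the pointwise bound $\int_0^t(L_\w\norm{\rho_s-\widetilde{\rho}_s}_{\TV}+\norm{\eta_s-\widetilde{\eta}_s}_\infty)\,\dd s$; the supremum over $t\in[0,T]$ then yields a bound of the form $T(L_\w+1)\,d_\infty((\rho,\eta),(\widetilde{\rho},\widetilde{\eta}))$.

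For the mass component I would use the duality definition of the total variation norm, writing $\norm{\S_V(\rho,\eta)(t)-\S_V(\widetilde{\rho},\widetilde{\eta})(t)}_{\TV}$ as the supremum over $\varphi\in C_0(\R^d)$ with $\norm{\varphi}_\infty\le1$ of the time integral against $\onabla\varphi$ of the difference of the two fluxes $F^\Phi[\mu,\eta_s,\rho_s;V_s[\rho_s]]$ and $F^\Phi[\mu,\widetilde{\eta}_s,\widetilde{\rho}_s;V_s[\widetilde{\rho}_s]]$. The heart of the argument is a telescoping decomposition of this flux difference into three pieces, each altering one ingredient at a time: (i) the weight, replacing $\eta_s$ by $\widetilde{\eta}_s$; (ii) the velocity, replacing $V_s[\rho_s]$ by $V_s[\widetilde{\rho}_s]$; and (iii) the measure entering the Radon--Nikodym densities, replacing $\rho_s$ by $\widetilde{\rho}_s$. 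To make (iii) meaningful I would fix a common reference measure $\lambda$ dominating all four products $\rho_s\otimes\mu$, $\mu\otimes\rho_s$, $\widetilde{\rho}_s\otimes\mu$, $\mu\otimes\widetilde{\rho}_s$, as in the Remark preceding the statement, so that the density differences are controlled by $\norm{\rho_s-\widetilde{\rho}_s}_{\TV}$.

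Each piece is then estimated using the structural properties of $\Phi$ in Definition~\ref{def:admissible_interpolation} together with $\abs{\onabla\varphi}\le 2\norm{\varphi}_\infty$. For (i) the degeneracy $\Phi(a,b;0)=0$ and the first Lipschitz inequality bound $\abs{\Phi(a,b;V_s[\rho_s])}$ by $L_\Phi(\abs{a}+\abs{b})\abs{V_s[\rho_s]}$; integrating against $\lambda$ turns $(\abs{a}+\abs{b})\,\dd\lambda$ into $\dd(\abs{\rho_s}\otimes\mu)+\dd(\mu\otimes\abs{\rho_s})$, and the compressibility bound \eqref{eq:velocity_bound} with the mass bound $\abs{\rho_s}[\R^d]\le M$ produces a factor $C_V M$ multiplying $\norm{\eta_s-\widetilde{\eta}_s}_\infty$. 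For (ii) the first Lipschitz inequality produces $\abs{V_s[\rho_s]-V_s[\widetilde{\rho}_s]}$, which by \eqref{eq:ass_Lip_rho} integrates to $L_V\norm{\rho_s-\widetilde{\rho}_s}_{\TV}$, again with factors $M$ and $\norm{\widetilde{\eta}}_\infty$. For (iii) the second Lipschitz inequality yields the density differences, controlled by $\norm{\rho_s-\widetilde{\rho}_s}_{\TV}$ after integration against $\lambda$, with factors $C_V$ and $\norm{\widetilde{\eta}}_\infty$. The one point requiring care --- and the main obstacle --- is that (ii) and (iii) carry a factor $\norm{\widetilde{\eta}}_\infty$ (resp. $\norm{\eta}_\infty$), which is not uniformly bounded over all of $\ACt$; here I would restrict the fixed-point argument to the closed subset on which the a priori bound $\norm{\eta_t}_\infty\le(\norm{\eta_0}_\infty+C_\w)e^T$ of Proposition~\ref{prop:int_supp_mass_preservation} holds (this is precisely where the dependence of $\kappa$ on $\norm{\eta_0}_\infty$ enters), and use that same proposition to verify that $\S$ maps this complete subset into itself.

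Combining the two components gives $d_\infty(\S(\rho,\eta),\S(\widetilde{\rho},\widetilde{\eta}))\le \kappa(T)\,T\,d_\infty((\rho,\eta),(\widetilde{\rho},\widetilde{\eta}))$, with $\kappa(T)$ collecting the constants $M,L_\Phi,L_V,C_\w,\norm{\eta_0}_\infty,T$ as stated. Since $\kappa(T)T$ is increasing and vanishes as $T\to 0$, there is a threshold $T^*$ solving $\kappa(T^*)T^*=1$, so for $T<T^*$ the map $\S$ is a strict contraction on the complete metric space above; Banach's fixed-point theorem then provides a unique fixed point, which by the construction of the solution maps in \eqref{eq:sol_maps} is exactly the unique solution of \eqref{eq:ivp} with the prescribed initial data.
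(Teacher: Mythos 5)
Your proposal is correct and follows essentially the same route as the paper's proof: the same telescoping of the flux difference into three pieces (weight, velocity, Radon--Nikodym densities with respect to a common dominating $\lambda$), the same use of the Lipschitz properties of $\Phi$ together with \eqref{eq:velocity_bound}, \eqref{eq:ass_Lip_rho} and \ref{ass:omega_lip}, the a priori bound on $\norm{\eta}_{\infty,C_b(\Rddiag)}$ from Proposition~\ref{prop:int_supp_mass_preservation}, and the Banach fixed-point theorem once $\kappa(T)T<1$. The only harmless deviations are the order of the telescoping and your explicit restriction to an invariant subset carrying the $\eta$-bound (the paper instead invokes \eqref{eq:unif_bound_eta_T} directly); note only that this bound should read $(\norm{\eta_0}_\infty+C_\w T)e^T$ rather than $(\norm{\eta_0}_\infty+C_\w)e^T$, and that the time-weighted form $\norm{\eta_t}_\infty\le(\norm{\eta_0}_\infty+C_\w t)e^t$ is the one preserved by $\S_\w$.
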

\begin{proof}
Let $(\rho,\eta)$ and $(\widetilde{\rho},\widetilde{\eta})$ belong to $\ACt$. Upon using our assumptions on the flux $\Phi$ in Definition~\ref{def:flux} we obtain 
\begin{align*}
|S_V(\rho, \eta)(t)-S_V(\widetilde{\rho}, \widetilde{\eta})(t)|
&\leq \frac{1}{2}\int_0^t  \int_{\Rddiag}\bigg|(\varphi(y) - \varphi(x)) \bigg( \eta_s \Phi\left(\frac{\dd \rho_s\otimes \dd\mu}{\dd \lambda }, \frac{\dd\mu \otimes \dd\rho_s}{\dd \lambda } ; V_s\left[\rho_s\right]\right) 
\\
& \qquad \qquad -\widetilde{\eta}_s \Phi\left(\frac{\dd \widetilde{\rho}_s\otimes \dd\mu}{\dd \lambda }, \frac{\dd\mu \otimes \dd\widetilde{\rho}_s}{\dd \lambda };V_s\left[\widetilde{\rho}_s\right]\right)\bigg) \bigg|\dd \lambda   \dd s\\
&\leq  \frac{1}{2}\int_0^t\int_{\Rddiag}\bigg|(\varphi(y) - \varphi(x))\eta_s\bigg(\Phi\left(\frac{\dd \rho_s\otimes \dd\mu}{\dd \lambda}, \frac{\dd\mu \otimes \dd\rho_s}{\dd \lambda } ; V_s\left[\rho_s\right]\right) 
\\
&\qquad \qquad -\Phi\left(\frac{\dd \widetilde{\rho}_s\otimes \dd\mu}{\dd \lambda }, \frac{\dd\mu \otimes \dd\widetilde{\rho}_s}{\dd \lambda};V_s\left[{\rho}_s\right]\right)\bigg)\bigg| \dd\lambda\dd s
\\
&\quad + \frac{1}{2} \int_0^t\int_{\Rddiag}\bigg|(\varphi(y) - \varphi(x))\left(\eta_s-\widetilde{\eta}_s\right) 
\\
&  \qquad \qquad  \times \Phi\left(\frac{\dd \widetilde{\rho}_s\otimes \dd\mu}{\dd \lambda}, \frac{\dd\mu \otimes \dd\widetilde{\rho}_s}{\dd \lambda};V_s\left[{\rho}_s\right]\right)\bigg| \dd\lambda\dd s
\\
&\quad + \frac{1}{2} \int_0^t\int_{\Rddiag} \bigg| (\varphi(y) - \varphi(x))\widetilde{\eta}_s\bigg(\Phi\left(\frac{\dd \widetilde{\rho}_s\otimes \dd\mu}{\dd \lambda}, \frac{\dd\mu \otimes \dd\widetilde{\rho}_s}{\dd \lambda};V_s\left[{\rho}_s\right]\right)
\\
& \qquad \qquad -\Phi\left(\frac{\dd \widetilde{\rho}_s\otimes \dd\mu}{\dd \lambda}, \frac{\dd\mu \otimes \dd\widetilde{\rho}_s}{\dd \lambda};V_s\left[{\widetilde{\rho}}_s\right]\right)\bigg) \bigg| \dd\lambda d s\\
&\leq \frac{L_{\Phi}}{2}  \int_0^t \int_{\Rddiag}|\varphi(y) - \varphi(x)|\left|V_s\left[\rho_s\right]\right| |\eta_s(x,y)|(\mathrm{d}\left(\left|\rho_s - \widetilde{\rho}_s\right| \otimes \mu\right)
\\
&\qquad\qquad \qquad +\mathrm{d}\left(\mu \otimes\left|\rho_s-\widetilde{\rho}_s\right|\right))\mathrm{d} s
\\
&\qquad + \frac{L_\Phi}{2}  \int_0^t \int_{\Rddiag}|\varphi(y) - \varphi(x)||V_s[{\rho}_s]| 
\\
& \qquad \qquad \qquad \qquad \times|\eta_s-\widetilde{\eta}_s|(\mathrm{d}\left(\mu \otimes\left|\widetilde{\rho_s}\right|\right)+\mathrm{d}\left(|\widetilde{\rho_s}|\otimes \mu \right))\dd s
\\
&
\qquad +\frac{L_\Phi}{2}  \int_0^t\int_{\Rddiag}|\varphi(y) - \varphi(x)| | \widetilde{\eta}_s|\left|V_s\left[\rho_s\right]-V_s\left[\widetilde{\rho_s}\right]\right| 
\\
&\qquad \qquad \qquad \qquad\times\left(\mathrm{d}\left(\left|\widetilde{\rho_s}\right| \otimes \mu\right)+\mathrm{d}\left(\mu \otimes\left|\widetilde{\rho_s}\right|\right)\right)   \dd s
\\
& 
:= I + II + III 
\end{align*}
We then have the following estimates for the terms $I,II,III$. Starting with $I$, using antisymmetry of $V$ and our assumption \eqref{eq:velocity_bound} we have 
\begin{align*}
 I& \le {L_{\Phi}} \norm{\varphi}_\infty \int_0^t \int_{\Rddiag}\left|V_s\left[\rho_s\right]\right||\eta_s |\left(\mathrm{d}\left(\left|\rho_s - \widetilde{\rho}_s\right| \otimes \mu\right)+\mathrm{d}\left(\mu \otimes\left|\rho_s-\widetilde{\rho}_s\right|\right)\right) \mathrm{d} s
 \\
 &\leq 2{L_{\Phi}}\norm{\varphi}_\infty \norm{\eta}_{\infty,C_{b}(\Rddiag)}  \int_0^t \int_{\Rddiag}\left|V_s\left[\rho_s\right]\right| \left(\mathrm{d}\left(  \mu\otimes\left|\rho_s - \widetilde{\rho}_s\right|\right)(y,x)\right) \mathrm{d} s 
 \\
 & \leq 2{L_{\Phi}} \norm{\varphi}_\infty\norm{\eta}_{\infty,C_{b}(\Rddiag)}  \int_0^t \int_{\R^d} \dd|\rho_s -\widetilde{\rho}_s|(x)\left(\sup _{\rho \in \M_{TV}^M\left(\mathbb{R}^d\right)}  \sup _{x \in \mathbb{R}^d}\int_{\R^d\setminus\{x\}}|V_s[\rho_s](x, y)| \dd\mu(y)\right)\dd s
 \\
 & \leq 2L_\Phi \norm{\varphi}_\infty C_V \norm{\eta}_{\infty,C_{b}(\Rddiag)}\|\rho-\widetilde{\rho}\|_{\infty, \cMtv(\R^d)}T
\end{align*}
For the term $II$, using antisymmetry of $V$ and \eqref{eq:velocity_bound} we get      
\begin{align*}
II & \le  L_\Phi\norm{\varphi}_\infty  \int_0^t \int_{\Rddiag}|V_s[{\rho}_s]| 
|\eta_s-\widetilde{\eta}_s|(\mathrm{d}\left(\mu \otimes\left|\widetilde{\rho}_s\right|\right)+\mathrm{d}\left(|\widetilde{\rho}_s|\otimes \mu \right))\dd s
\\
& \leq  2  L_\Phi\norm{\varphi}_\infty\|\eta-\widetilde{\eta} \|_{\infty, C_{b}(\Rddiag)} \int_0^t \int_{\Rddiag} |V_s[{\rho}_s](x,y)|| \dd \mu(y) \dd |\widetilde{\rho}_s|(x) \dd s
\\
&\leq 2 L_\Phi\norm{\varphi}_\infty C_V M \|\eta-\widetilde{\eta} \|_{\infty, C_{b}(\Rddiag)}T
\end{align*}
Finally, we have the following bound for term $III$ using our assumption \eqref{eq:ass_Lip_rho} on $V$:
\begin{align*}
    III& \le L_\Phi \norm{\varphi}_\infty \int_0^t\int_{\Rddiag}|\widetilde{\eta}_s|\left|V_s\left[\rho_s\right]-V_s\left[\widetilde{\rho}_s\right]\right| 
\left(\mathrm{d}\left(\left|\widetilde{\rho}_s\right| \otimes \mu\right)+\mathrm{d}\left(\mu \otimes\left|\widetilde{\rho}_s\right|\right)\right) \mathrm{d} s
    \\
    &\leq 2  L_\Phi\norm{\varphi}_\infty \norm{\widetilde{\eta}}_{\infty,C_{b}(\Rddiag)} \int_0^t\int_{\Rddiag} \left|V_s\left[\rho_s\right](x,y)-V_s\left[\widetilde{\rho}_s\right](x,y)\right|\dd \mu(y) \dd |\widetilde{\rho}_s|(x)  \dd s
    \\
    & \leq 2L_\Phi \norm{\varphi}_\infty  L_VM \norm{\widetilde{\eta}}_{\infty,C_{b}(\Rddiag)}\|\rho-\widetilde{\rho}\|_{{\infty,\cMtv}(\R^{d})}T.
\end{align*}
Thus, taking the supremum over all $\varphi \in C_0(\R^d)$ with $\norm{\varphi}_\infty\leq1$ and recalling the definition of the $\TV$-norm we have the following estimate
\begin{align}\label{eq:stability_rho}
\|\S_V(\eta,\rho) - \S_V(\widetilde{\eta},\widetilde{\rho})\|_{\infty,\M_{TV}^M(\R^d)} &\leq   2L_\Phi  C_V \norm{\eta}_{\infty,C_{b}(\Rddiag)}\|\rho-\widetilde{\rho}\|_{\infty, \M^M_{TV}(\R^d)}T \nonumber
 \\
 & \qquad +  2 L_\Phi C_V M \|\eta-\widetilde{\eta} \|_{\infty, C_{b}(\Rddiag)}T
 \\
 &\qquad +2L_\Phi   L_VM \norm{\widetilde{\eta}}_{\infty,C_{b}(\Rddiag)}\|\rho-\widetilde{\rho}\|_{{\infty,\M_{TV}^M}(\R^{d})}T.\nonumber
\end{align}
On the other hand for $(x,y) \in \Rddiag$ and $t \in [0,T]$ we have
\begin{equation}\label{eq:eta_stability}
\begin{split}
|\S_\w(\rho,\eta)(x,y)(t)  - \S_\w(\widetilde{\rho},\widetilde{\eta})(x,y)(t)  |& \leq \int_0^t|\eta_s(x, y)-\widetilde{\eta}_s(x, y)|\dd s
\\
& \qquad + \int_0^t  |\w_s[\rho_s](x,y)-\w_s[\widetilde{\rho}_s](x,y)|\dd s  \ .
\end{split}
\end{equation}
Thus, by assumption~\ref{ass:omega_lip},
\begin{equation*}
\begin{split}
 \norm{\S_\w(\rho, \eta)-\S_\w(\widetilde{\rho}, \widetilde{\eta})}_{\infty,C_{b}(\Rddiag)}
& \leq \norm{\eta-\widetilde{\eta}}_{\infty,C_{b}(\Rddiag)}T
\\
& \qquad \quad +\int_0^T\sup_{(x,y) \in \Rddiag}\left|\w_t[\rho_t](x,y)-\w_t[\widetilde{\rho}_t](x,y)\right|\dd t
\\
& \leq \bigg(\norm{\eta-\widetilde{\eta}}_{\infty,C_b(\Rddiag)}
\\
& \qquad + L_\w\norm{\rho - \widetilde{\rho}}_{\infty,\M_{TV}^M(\R^d)}\bigg) T.
\end{split}
\end{equation*}
According to Proposition~\ref{prop:int_supp_mass_preservation}, Eq.~\eqref{eq:eta_time_bound}, having assumption~\ref{ass:omega_bounded} we know
\begin{equation}\label{eq:unif_bound_eta_T}
\|\eta\|_{\infty,C_b(\Rddiag)}\le\left(\|\eta_0\|_{\infty}+C_\w T\right)e^T.
\end{equation}

Combining the previous estimates we infer 
\begin{align*}
    d_\infty((\S(\eta,\rho),\S(\widetilde{\eta},\widetilde{\rho}))&\leq  (2  L_\Phi(\norm{\eta}_{\infty,C_{b}(\Rddiag)} C_V 
    \\
    & \qquad + \norm{\widetilde{\eta}}_{\infty,C_{b}(\Rddiag)}L_VM) + L_\w ) \|\rho-\widetilde{\rho}\|_{\infty, \M_{TV}^M(\R^d)}T 
 \\
 & \qquad + (2 L_\Phi    C_V M +1 )\|\eta-\widetilde{\eta} \|_{\infty, C_b(\Rddiag)}T
\\
&
\leq \underbrace{(2  L_\Phi( C_V + L_VM)(\|{\eta}_0\|_\infty+C_\w T)e^T + L_\w )}_{:=\alpha} \|\rho-\widetilde{\rho}\|_{ \infty,\M_{TV}^M(\R^d)}T 
 \\
 &\qquad + \underbrace{(2 L_\Phi    C_V M +1 )}_{:=\beta}\|\eta-\widetilde{\eta} \|_{\infty, C_b(\Rddiag)}T
 \\
 & \leq  \max\{\alpha(T),\beta\} Td_\infty((\rho,\eta),(\widetilde{\rho},\widetilde{\eta})).
\end{align*}
When $T< \frac{1}{\max\{\alpha(T),\beta\}}$ we have that the solution map is a contraction. We check this is not an issue in case the maximum is $\alpha(T)=L_\w+2L_\Phi(C_V+L_VM)\|{\eta}_0\|_\infty e^T+2L_\Phi(C_V+L_VM)C_\w T e^T)$, which we rewrite as $\alpha(T):=\sigma+\gamma e^T+\chi T e^T$, for some positive constants $\sigma, \gamma, \chi$. The inequality $T<\frac{1}{\alpha(T)}$ is true for $T<T^*$, where $T^*=T^*(\sigma,\gamma,\chi)$ is fixed and solves $\alpha(T^*)T^*=1$. Denoting by $\kappa:= \max\{\alpha,\beta\}$ we obtain existence and uniqueness when $T< \frac{1}{\kappa}$ as a direct consequence of the Banach fixed-point theorem. 
\end{proof}
\begin{remark}
    For the sake of completeness we observe that Banach fixed-point theorem is applied to $C_T=C([0,T]; \M_{TV}^M(\R^d)) \times C([0,T]; C_b(\Rddiag))$. Absolute continuity in time follows from the fact that the flux and the map $(t\mapsto \w_t[\rho_t]-\eta_t)$ belong to $L^1([0,T])$, as proven in~Proposition~\ref{prop:int_supp_mass_preservation}.
\end{remark}

\begin{remark}\label{rem:eta_positive} Although this is not required for our result to hold, we note that, if in addition to the current assumptions on $\w$ we impose 
\[
\inf_{(x,y) \in \Rddiag}\eta_0(x,y) \geq\norm{\w_-}_\infty (e^T-1)\ , 
\]
where $\norm{\w_-}_\infty= \sup_{t \in [0,T]}\sup_{\rho \in \cMtv(\R^d)}\sup_{x,y \in \Rddiag}\big|(\w_t)_{-}[\rho](x,y)\big|$, we obtain non-negativity preservation of $\eta$ from the explicit solution
\[
   \eta_t(x,y) = e^{-t}\left(\eta_0(x,y) + \int_0^t e^{s}\w_s[\rho_s](x,y) \dd s\right).
\]
We note that in \cite{esposito2022class} only non-negative weight functions were considered, while here we allow for weights to become negative which is used in many applications, see for example~\cite{shi2019dynamics}.
\end{remark}
Now we are ready to prove existence and uniqueness of the initial value problem \eqref{eq:ivp}, which is the content of the following theorem. 
\begin{theorem}[Existence and uniqueness for~\eqref{eq:ivp}]\label{thm:well-posedness} Let $V:[0,T]\times \cMtv^M(\R^d)\times \Rddiag \to \V^{as}(\Rddiag)$ and $\w:[0,T]\times \M_{TV}^M(\R^d)\times \Rddiag \to \R$ satisfy~\eqref{eq:velocity_bound}, ~\eqref{eq:ass_Lip_rho} and~\ref{ass:w_continuous}---\ref{ass:omega_bounded}, respectively. Furthermore, let the assumptions of Lemma \ref{lemma:contraction} hold. Then there exists a unique solution $(\rho,\eta)$ to \eqref{eq:ivp} such that $(\rho_0,\eta_0)=(\rho^0,\eta^0)$.
\end{theorem}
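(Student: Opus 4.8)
The plan is to upgrade the \emph{local} well-posedness of Lemma~\ref{lemma:contraction}, which only produces a solution on $[0,T]$ for $T<T^*$, to a solution on the full fixed finite interval $[0,T]$ by a continuation argument. The mechanism that makes the continuation terminate in finitely many steps is that the a priori bounds of Proposition~\ref{prop:int_supp_mass_preservation} are \emph{uniform in time}: any solution keeps $\rho_t$ in a fixed total-variation ball $\M^M_{TV}(\R^d)$ (mass and TV control) and, by~\eqref{eq:unif_bound_eta_T}, satisfies $\norm{\eta}_{\infty,C_b(\Rddiag)}\le(\norm{\eta_0}_\infty+C_\w T)e^T=:\bar\eta$, a constant depending only on $\norm{\eta_0}_\infty$, $C_\w$ and $T$. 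Consequently, if at any intermediate time $t_*\in[0,T]$ one restarts the fixed-point scheme from the attained data $(\rho_{t_*},\eta_{t_*})$, these again lie in $\M^M_{TV}(\R^d)\times C_b(\Rddiag)$ with $\norm{\eta_{t_*}}_\infty\le\bar\eta$.

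First I would record that the contraction constant $\kappa$ of Lemma~\ref{lemma:contraction}, when the argument is run on a subinterval $[t_*,t_*+\tau]$ issuing from such restart data, depends on $\norm{\eta_{t_*}}_\infty$ only through the factor $(\norm{\eta_0}_\infty+C_\w T)e^T$ appearing in $\alpha$. Replacing $\norm{\eta_0}_\infty$ by the global bound $\bar\eta$ and restricting to $\tau\le1$ yields a constant $\bar\kappa$ bounding every restart contraction constant uniformly in $t_*$. Hence there is a single $\tau^*=\min\{1,(2\bar\kappa)^{-1}\}>0$, independent of the base point $t_*$, for which the solution map $\S$ is a $d_\infty$-contraction on the corresponding subinterval of length $\tau^*$.

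Next I would iterate. Partition $[0,T]$ as $0=t_0<t_1<\dots<t_N=T$ with $t_{k+1}-t_k\le\tau^*$ and $N=\lceil T/\tau^*\rceil$. On $[t_0,t_1]$ Lemma~\ref{lemma:contraction} gives a unique solution starting from $(\rho_0,\eta_0)$; I then restart at $t_1$ from the attained value $(\rho_{t_1},\eta_{t_1})$ --- admissible by the previous paragraph --- to obtain a unique solution on $[t_1,t_2]$, and so on. Concatenating the $N$ pieces defines a pair $(\rho,\eta)$ on $[0,T]$. Since the integral identities~\eqref{eq:rho_evolution}--\eqref{eq:eta_evolution} are additive in time, the glued pair satisfies Definition~\ref{def:sol_to_ivp}~\ref{cond:sol_3} on all of $[0,T]$, and the piecewise absolute continuity matches at the nodes to give $\rho\in AC([0,T];\M^M_{TV}(\R^d))$ and $\eta\in AC([0,T];C_b(\Rddiag))$, as required by~\ref{cond:sol_1}. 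Uniqueness propagates along the same partition: two solutions with identical initial datum both obey the uniform bounds, hence coincide on $[t_0,t_1]$ by the local uniqueness in Lemma~\ref{lemma:contraction}; sharing their value at $t_1$, they coincide on $[t_1,t_2]$, and inductively on $[0,T]$.

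The main obstacle --- and the reason Proposition~\ref{prop:int_supp_mass_preservation} is indispensable --- is ensuring that the local existence horizon does not shrink to zero as the continuation proceeds. This is exactly what the uniform-in-time control guarantees: mass and total-variation preservation confine $\rho$ to the fixed ball $\M^M_{TV}(\R^d)$, while the Gr\"onwall bound~\eqref{eq:unif_bound_eta_T} caps $\norm{\eta}_\infty$ by $\bar\eta$ over the \emph{entire} interval, so the restart contraction constants stay below $\bar\kappa$ and $\tau^*$ may be fixed once and for all. As an alternative one could seek a single global contraction on $[0,T]$ by equipping the ambient space with a Bielecki-type exponentially weighted norm $\sup_t e^{-Lt}\norm{\cdot}$, absorbing the time integrals into a factor $1/L$ made small by choosing $L$ large; I would, however, follow the continuation route, since it reuses Lemma~\ref{lemma:contraction} verbatim.
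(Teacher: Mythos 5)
Your proposal is correct and follows essentially the same route as the paper: iterate the local contraction of Lemma~\ref{lemma:contraction} on subintervals of a fixed length, concatenate the pieces using the additivity of the integral identities~\eqref{eq:rho_evolution}--\eqref{eq:eta_evolution}, and propagate uniqueness interval by interval. The one (welcome) difference is that you make explicit why the restart horizon cannot shrink --- replacing $\norm{\eta_0}_\infty$ in the contraction constant by the global bound $\bar\eta$ from~\eqref{eq:unif_bound_eta_T} so that a single step size $\tau^*$ works at every node --- a point the paper's proof leaves implicit when it reapplies the lemma with the fixed step $\tau=a/2$.
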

\begin{proof}
  
     Let $\kappa$ be as in Lemma \ref{lemma:contraction}. The condition $T<\frac{1}{\kappa}$ can be also interpreted as $T<a:=\min\left\{\frac{1}{\beta}, T^*(\sigma,\gamma,\chi)\right\}$. Assume $T\ge a$, let $\tau:= a/2$, and denote $N:=\left[\frac{T}{\tau}\right]$. Then by Lemma \ref{lemma:contraction}, we know there exists a unique solution in $\ACt$ to \eqref{eq:ivp} on $[0,\tau]$. Denote this solution by $(\rho^1,\eta^1)$ and note that $(\rho^1,\eta^1) \in \mathcal{AC}_{0,\tau}$, where $\mathcal{AC}_{0,\tau}=AC([0,\tau], \M_{TV}^M(\R^d)) \times AC([0,\tau], C_{b}(\Rddiag)) $. Another application of Lemma \ref{lemma:contraction} yields the existence and uniqueness of $(\rho^2,\eta^2) \in \mathcal{AC}_{\tau,2\tau}$, the solution of \eqref{eq:ivp} on $[\tau,2\tau]$. Iterating this procedure we can construct a sequence of solutions
    \[
    (\rho^i,\eta^i) \in \mathcal{AC}_{(i-1)\tau,i\tau} \ \text{ for all } i \in \{1,\ldots,N\}, \ \qquad (\rho^{N+1},\eta^{N+1}) \in \mathcal{AC}_{N \tau, T} \ .
    \]
    We can now define the curve $(\rho,\eta) \in \mathcal{AC}_{0,T} = \ACt$ by 
    $$
\begin{cases}
(\rho_t,\eta_t)=(\rho^i,\eta^i) & \text { for all } t \in[(i-1) \tau, i \tau) \text { and } i \in\{1, \ldots, N\} \\ (\rho_t,\eta_t)=(\rho^{N+1}_t,\eta^{N+1}_t) & \text { for all } t \in[N \tau, T]\end{cases}
$$
which, by construction, is the unique solution to \eqref{eq:ivp}.
\end{proof}

\begin{remark}
    In a similar spirit of~\cite[Section 5]{esposito2022class}, we could extend the result of this section to the $L^1_\mu$ setting, i.e. $\rho \ll \mu$, which can be obtained  upon assuming $\rho_0\ll\mu$ and choosing $\lambda=\mu\otimes\mu$. In this scenario, for $\eta_t\ge0$ and symmetric for any $t\in [0,T]$ --- obtained by assuming $\eta_0(x,y)$ and $\w_t[\cdot](\cdot,x,y)$ to be symmetric and $\eta$ to satisfy the condition in Remark \ref{rem:eta_positive} --- we can prove non-negativity preservation for $\rho_t$ as in~\cite[Proposition 5.2]{esposito2022class}, i.e., if the initial condition is non-negative then the solution $\rho_t(x)\geq 0$ for a.e. $t \in [0,T]$ and $\mu$-a.e. $x \in \Rd$.
\end{remark}

\section{Fast-Slow versions of the problem} \label{sec:slow_fast_versions}
In this section we are concerned with different time scales for the evolution of the weights, motivated by~\cite[Section 2.2]{Burger_kinetic_net22}. We emphasize that the notion of solution outlined in Definition \ref{def:sol_to_ivp} is the one used throughout. Let $V:[0,T]\times \cMtv^M(\R^d)\times \Rddiag \to \V^{as}(\Rddiag)$ and $\w:[0,T]\times \M_{TV}^M(\R^d)\times \Rddiag\to \R$ satisfy~\eqref{eq:velocity_bound} and~\ref{ass:w_continuous}---\ref{ass:omega_bounded}, respectively. We start with the following version of \eqref{eq:ivp}:
\begin{equation}\label{eq:ivp_with_slow_graph}
\begin{cases}
    \partial_t \rho_t =  -\onabla \cdot F^\Phi[\mu, \eta_t ; \rho_t,V_t[\rho_t]]  
    \\
    \partial_t \eta_t = \varepsilon(\w_t[\rho_t]-\eta_t)
    \\
     \rho_0 \in \M^M_{TV}(\R^d), \ \eta_0  \in C_{b}(\Rddiag) \ ,
\end{cases}\tag{$\mathrm{Co\!-\!NCL_S}$}
\end{equation}
where $\varepsilon>0$ is a small parameter. This problem corresponds to the case in which the weights of the graph evolve at a slower time scale than the mass on the vertices. Indeed, if we formally consider that the graph evolves at a time scale $\tau = \varepsilon t$, then expressing the evolution of $\eta$ for the time scale $t$ yields \eqref{eq:ivp_with_slow_graph}.  

Theorem~\ref{thm:well-posedness} implies this problem is well-posed for $\varepsilon>0$. Then, it is natural to study the behaviour of the system~\eqref{eq:ivp_with_slow_graph} as $\varepsilon$ approaches $0^+$.  The following result shows that, as $\varepsilon \to 0^+$, the solution to~\eqref{eq:ivp_with_slow_graph} converges to the solution of a PDE on a  static graph, meaning the weight function is given by the initial condition, $\eta_0$, which does not change in time. 
\begin{theorem}\label{prop:slow_convergence}
Let $(\varepsilon_n)_{n \in \N} \subset (0,\infty)$ a vanishing sequence, that is $\varepsilon_n \to 0^+$. Consider the sequence of solutions $\{(\rho^n,\eta^n)\}_{n \in \N}$ to \eqref{eq:ivp_with_slow_graph} with $\rho^n_0 = \rho^0 \in \M^M_{TV}(\R^d)$, and $\eta^n_0 = \eta^0 \in C_{b}(\Rddiag)$ satisfying  \eqref{eq:velocity_bound}, \eqref{eq:ass_Lip_rho} and \ref{ass:w_continuous}--\ref{ass:omega_bounded}. It holds that
\[
d_\infty((\rho^n,\eta^n),(\overline{\rho},\eta^0)) \to 0, \text{ as } n \to \infty,
\]
where
\[
\overline{\rho}_t+ \int_0^t \onabla \cdot F^\Phi[\mu, \eta^0 ; \overline{\rho}_s,V_s(\overline{\rho}_s)]  \dd s = {\rho}^0 \ ,
\]
understood in duality with $C_0(\R^d)$. 
\end{theorem}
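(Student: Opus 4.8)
The plan is to control the two components of $d_\infty$ separately: the weight part $\|\eta^n-\eta^0\|_{\infty,C_b(\Rddiag)}$ and the mass part $\|\rho^n-\overline\rho\|_{\infty,\M_{TV}(\R^d)}$. Before doing so, I record that the limit profile $\overline\rho$ exists and is unique: the limiting equation is precisely \eqref{eq:ivp} for the admissible choice $\w_t[\cdot]\equiv\eta^0$ (which satisfies \ref{ass:w_continuous}--\ref{ass:omega_bounded} with $L_\w=0$), since then the second equation has the stationary solution $\eta_t\equiv\eta^0$ and the first reduces to the static-graph conservation law for $\overline\rho$; hence Theorem~\ref{thm:well-posedness} furnishes a unique $\overline\rho\in AC([0,T];\M_{TV}^M(\R^d))$. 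I also record the uniform-in-$n$ bounds that will keep all constants below independent of $n$: since $\rho^n_t\in\M_{TV}^M(\R^d)$ we have $|\rho^n_t|[\R^d]\le M$, and solving the $\eta$-equation via the integrating factor $e^{\varepsilon_n t}$ gives
\[
\eta^n_t = e^{-\varepsilon_n t}\eta^0 + \varepsilon_n\int_0^t e^{-\varepsilon_n(t-s)}\w_s[\rho^n_s]\,\dd s,
\]
whence, using assumption~\ref{ass:omega_bounded}, $\|\eta^n_t\|_\infty\le\|\eta^0\|_\infty+C_\w(1-e^{-\varepsilon_n t})\le\|\eta^0\|_\infty+C_\w$ uniformly in $n$ and $t$.

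The weight part is then immediate from the same representation. Subtracting $\eta^0$ yields $\eta^n_t-\eta^0=(e^{-\varepsilon_n t}-1)\eta^0+\varepsilon_n\int_0^t e^{-\varepsilon_n(t-s)}\w_s[\rho^n_s]\,\dd s$, so that
\[
\|\eta^n-\eta^0\|_{\infty,C_b(\Rddiag)}\le(1-e^{-\varepsilon_n T})\big(\|\eta^0\|_\infty+C_\w\big)=:\delta_n\longrightarrow 0.
\]

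For the mass part I would test the duality formulations of $\rho^n$ and $\overline\rho$ (that is, \eqref{eq:rho_evolution} for $(\rho^n,\eta^n)$ and the defining identity for $\overline\rho$) against $\varphi\in C_0(\R^d)$ with $\|\varphi\|_\infty\le1$, and estimate the flux difference $F^\Phi[\mu,\eta^n_s;\rho^n_s,V_s[\rho^n_s]]-F^\Phi[\mu,\eta^0;\overline\rho_s,V_s[\overline\rho_s]]$ by the identical three-term telescoping of Lemma~\ref{lemma:contraction}, now with $(\rho,\eta)=(\rho^n,\eta^n)$ and $(\widetilde\rho,\widetilde\eta)=(\overline\rho,\eta^0)$: one term controlled by the argument-wise Lipschitz bound of Definition~\ref{def:admissible_interpolation}, item~\ref{ass:interp_lip}, together with \eqref{eq:velocity_bound}; one by the weight difference $\|\eta^n_s-\eta^0\|_\infty$; and one by the velocity Lipschitz bound \eqref{eq:ass_Lip_rho}. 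The crucial deviation from the lemma is that I do \emph{not} pull the time-integrals out as a supremum (which is what forced the small-time restriction there); instead I keep them, so that taking the supremum over $\varphi$ and recalling the definition of $\|\cdot\|_{\TV}$ gives the Grönwall-type inequality
\[
\|\rho^n_t-\overline\rho_t\|_{\TV}\le C_1\int_0^t\|\rho^n_s-\overline\rho_s\|_{\TV}\,\dd s+C_2\int_0^t\|\eta^n_s-\eta^0\|_\infty\,\dd s,
\]
with $C_1=2L_\Phi\big(C_V(\|\eta^0\|_\infty+C_\w)+L_V M\|\eta^0\|_\infty\big)$ and $C_2=2L_\Phi C_V M$, both independent of $n$ thanks to the uniform bounds above.

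Finally, bounding the forcing term by $C_2 T\delta_n$ and applying Grönwall's inequality yields $\|\rho^n_t-\overline\rho_t\|_{\TV}\le C_2 T\delta_n\,e^{C_1 T}$ for all $t\in[0,T]$, hence $\|\rho^n-\overline\rho\|_{\infty,\M_{TV}(\R^d)}\to 0$; combined with $\|\eta^n-\eta^0\|_{\infty,C_b(\Rddiag)}=\delta_n\to 0$ this is exactly $d_\infty((\rho^n,\eta^n),(\overline\rho,\eta^0))\to 0$. The main obstacle is not conceptual but one of bookkeeping: one must verify that every constant in the flux-difference estimate is uniform in $n$ — which rests on the uniform mass bound $M$ and the uniform weight bound $\|\eta^0\|_\infty+C_\w$ established at the outset — and one must treat $\|\eta^n_s-\eta^0\|_\infty$ as a vanishing \emph{source} rather than absorbing it into a contraction, so that the estimate closes on the whole interval $[0,T]$ and not merely for small times.
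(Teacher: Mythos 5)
Your proof is correct and follows essentially the same route as the paper's: the three-term telescoping of Lemma~\ref{lemma:contraction} with the time integrals kept inside rather than pulled out as suprema, the weight difference treated as a vanishing source of order $\varepsilon_n$, and a Gr\"onwall argument that closes the estimate on all of $[0,T]$. The only (harmless, arguably cleaner) deviations are that you bound $\|\eta^n_t-\eta^0\|_\infty$ via the explicit integrating-factor formula using only \ref{ass:omega_bounded}, whereas the paper runs a Gr\"onwall estimate using $L_\w$ and the crude bound $\|\rho^n_s-\overline{\rho}_s\|_{\TV}\le 2M$, and that you explicitly justify existence and uniqueness of $\overline{\rho}$ by invoking Theorem~\ref{thm:well-posedness} with the constant choice $\w\equiv\eta^0$, a point the paper leaves implicit.
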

\begin{proof}
We begin noticing that Proposition \ref{prop:int_supp_mass_preservation} ensures, for any $n\in \N$, that a solution $\eta^n$ of \eqref{eq:ivp_with_slow_graph} belongs to $C([0,T],C_{b}(\Rddiag))$, satisfying the uniform bound~\eqref{eq:unif_bound_eta_T}. Thus, let $M_\eta:= \sup_{n \in \N}\norm{\eta^n}_{\infty, C_b(\Rddiag)}$. The argument follows the proof of Lemma~\ref{lemma:contraction}.

From~\eqref{eq:eta_stability} we know 
\begin{align*} 
    \norm{\eta^n_t - \eta_0}_{\infty} \leq \varepsilon_n\left( \int_0^t\norm{\eta^n_s-\eta_0}_{\infty}\dd s
 + L_\w\int_0^t\norm{\rho_s - \overline{\rho}_s}_{TV}\dd s\right)\ .
\end{align*}
Noting that $t\mapsto \int_0^t\norm{\rho_s - \overline{\rho}_s}_{TV}\dd s$ is non-decreasing, the corresponding version of Gr\"onwall's inequality implies,
\begin{align}\label{eq:estimate_for_eta_eps}
\norm{\eta^n_t - \eta_0}_{\infty}\le\varepsilon_ne^{\varepsilon_nT}(2L_\w MT)
\end{align}
Furthermore, following analogous calculations to the ones in Lemma \ref{lemma:contraction} used to obtain \eqref{eq:stability_rho}, we now compare $\rho_t^n$ and $\overline{\rho}_t$ and get  
\begin{align*}
   \norm{\rho^n_t - \overline{\rho}_t}_{{TV}} &\leq   2L_\Phi  C_V \norm{\eta^n}_{\infty, C_{b}(\Rddiag)} \int_0^t\norm{\rho^n_s - \overline{\rho}_s}_{TV}\dd s
 \\
 & \qquad + 2 L_\Phi C_V M \int_0^t\|\eta^n_s-\eta_0 \|_{ \infty}\dd s
 \\
 &\qquad +2L_\Phi  \norm{\eta_0}_{\infty} L_VM \int_0^t\norm{\rho^n_s - \overline{\rho}_s}_{TV}\dd s.
\end{align*}
Differently from Lemma~\ref{lemma:contraction}, we do not further bound the terms under the time integrals. Then, using \eqref{eq:estimate_for_eta_eps}, the uniform boundedness of $\eta^n$ and $\eta_0$ and an application of the same version of Gr\"onwall's inequality as before yields
\begin{equation}\label{eq:estimate_for_rho_eps}
    \norm{\rho^n_t - \overline{\rho}_t}_{TV} \leq\varepsilon_n e^{2 L_{\Phi}(M_\eta C_V + L_VM \|\eta_0\|_\infty) T+ \varepsilon_n T}(4 L_\Phi C_VL_\w M^2 T^2)
\end{equation}
The result follows from taking the supremum with respect to $t \in [0,T]$ in \eqref{eq:estimate_for_eta_eps} and \eqref{eq:estimate_for_rho_eps}, and letting $n$ to $\infty$.
\end{proof}

The second scenario we consider is when the graph evolves faster than the mass on the vertices, i.e.
\begin{equation} \label{eq:zero_inertia_limit}
\begin{cases}
     \partial_t \rho_t=-  \onabla \cdot F^\Phi[\mu, \eta_t ; \rho_t,V_t[\rho_t]]  
    \\
    \varepsilon \partial_t \eta_t(x,y) =\w_t[\rho](x,y)- \eta_t(x,y)  ,
    \\
     \rho_0 \in \M^M_{TV}(\R^d), \ \eta_0  \in C_{b}(\Rddiag) \ . \
\end{cases}\tag{$\mathrm{Co\!-\!NCL_F}$}
\end{equation}
System \eqref{eq:zero_inertia_limit} can be obtained by (formally) considering that the weight function $\eta$ evolves at a time scale $\tau = \frac{t}{\varepsilon}$ and then re-writing its dynamics at the time scale $t$ of the mass on the vertices. We are interested in studying the limit of this system as $\varepsilon \to 0^+$. As one can formally see from~\eqref{eq:zero_inertia_limit}, this limit would identify the weight function $\eta$ as a function depending on the mass configuration, nonlocal in case $\omega$ is as in Example~\ref{ex:omega_nonlocal}, providing a whole class of weight functions not considered so far in the literature, to the best of our knowledge.

A classical result for systems of ODEs whose orbits are contained in Euclidean space is given by the Tykhonov Theorem, cf. e.g.~\cite[Chapter 3]{banasiak2014methods}, providing suitable conditions for the $\varepsilon \to 0^+$ limit to yield the formal limit in \eqref{eq:zero_inertia_limit}. In our case, however, we do not exploit the previous result as we have an explicit solution for $\eta$ and can evaluate the limit directly.

In order to use the latter information, we require more regularity of the function $\w$, in particular, we further assume:
\begin{enumerate}[start=4,label=$(\bm{\w}\arabic*)$]
    \item $t \mapsto \w_t[\cdot](\cdot,\cdot) \in W^{1,1}([0,T])$;\label{ass:omega_sobolev_time}
    \item $ \esssup_{t \in [0,T]}\sup_{\sigma \in \cMtv(\R^d)}\sup_{x,y \in \Rddiag}\big|\partial_t\w_t[\sigma](x,y)\big| \le \widetilde{C}_\w, $\label{ass:omega_der_bounded_in_space} for some $\widetilde{C}_\w \in (0,\infty),$
\end{enumerate}
where $W^{1,1}$ stands for a Sobolev space. The additional hypotheses above are needed in order to exploit the explicit form of $\eta$ and to use integration by parts in Sobolev spaces. We note that given our assumptions \eqref{eq:velocity_bound} and \eqref{eq:ass_Lip_rho} and \ref{ass:w_continuous}-\ref{ass:omega_der_bounded_in_space}, \eqref{eq:zero_inertia_limit} is well-posed by Lemma~\ref{lemma:contraction} and Theorem \ref{thm:well-posedness} for any $\varepsilon>0$.  
\begin{example}\label{ex:omega_nonlocal}
    In this case, we can consider the following slightly modified version of the example~\eqref{eq:example_omega_intro}
\[
\w_t[\sigma](x,y) = \int_{\R^d}K(t, x,y,z)\dd\sigma(z),
\]
for $\sigma \in \M^M_{TV}(\R^d)$. We assume $K:[0,T]\times \Rddiag\times \R^d \to \R$ to satisfy: 
\begin{itemize}
    \item $t \mapsto K(t, \cdot,\cdot,\cdot) \in C^1([0,T])$.
    \item $(x,y,z) \mapsto K(\cdot, x,y,z) \in C_0(\Rddiag\times\R^d)$.
    \item $\partial_tK(t,x,y,z)\in C([0,T],C_0(\Rddiag\times\R^d))$.
\end{itemize}
The assumptions~\ref{ass:w_continuous}-\ref{ass:omega_bounded} still hold, and the derivative is given by 
 \[
 \partial_t \w_t[\sigma](x,y) =    \int_{\R^d}\partial_tK(t,x,y,z)\dd\sigma(z).
 \]
where we note that the derivative of the function $t \mapsto K*\sigma(t,\cdot,\cdot,\cdot)$ follows from an application of the Dominated Convergence Theorem. Hence, we can verify~\ref{ass:omega_sobolev_time},~\ref{ass:omega_der_bounded_in_space}.
\end{example}

\begin{remark}
Later on we actually need assumptions~\ref{ass:omega_sobolev_time}~and~\ref{ass:omega_der_bounded_in_space} for solutions of~\eqref{eq:zero_inertia_limit}, that is for time-dependent measures $\sigma$. Note that for any $(t,x,y) \in [0,T] \times \Rddiag$, we can consider the test function $z\mapsto K(\cdot,\cdot,\cdot,z)\in C_0(\Rd)$. Then, for a pair $(\rho,\eta)$ solving ~\eqref{eq:zero_inertia_limit} we know from Proposition \ref{prop:int_supp_mass_preservation} that $\rho\in\AC([0,T];\cMtv^M(\R^d))$ and, for a.e. $t\in[0,T]$, we have  
\begin{align*}
   \partial_t\w_t[\rho_t](x,y)&= \partial_t\int_{\Rd}K(t,x,y,z)\dd\rho_t(z)
   \\
   & =\int_{\Rd}\partial_t K(t,x,y,z)\dd\rho_t(z)
   \\
   &\qquad +\frac{1}{2}\iint_{\Rddiag}(K(t,x,y,v)-K(t,x,y,u))\dd F^\Phi[\mu, \eta_t ; \rho_t,V_t[\rho_t]](u,v)  \ .
\end{align*}
Therefore, assumption~\ref{ass:omega_sobolev_time} and~\ref{ass:omega_der_bounded_in_space} are satisfied by our assumptions on $K$, the flux $F^\Phi$, and the fact that $(\rho_t)_{t\in[0,T]}$ is a solution to~\eqref{eq:zero_inertia_limit}.
\end{remark}

\begin{theorem}\label{thm:Co_NCL_F}
    Let $(\varepsilon_n)_{n \in \N}\subset(0,\infty)$ such that $\varepsilon_n \to 0^+$ as $n\to \infty$, and consider a sequence of solutions $\{(\rho^n,\eta^n)\}_{n\in\mathbb{N}}$ to \eqref{eq:zero_inertia_limit} with $\eta_0^n\in C_b(\Rddiag)$ and $\rho_0^n \in \M^{M}_{TV}(\R^d)$ satisfying $\|\eta_0^n-\w_0[\rho_0^n]\|_\infty\to 0$, as $n\to\infty$. Assume $V$ and $\w$ satisfy~\eqref{eq:velocity_bound},  \eqref{eq:ass_Lip_rho}, and~\ref{ass:w_continuous}-($\bm{\w}5$), respectively. It holds 
    \[
    d_\infty((\rho^n,\eta^n), (\rho, \w)) \to 0 \text{ as } n \to \infty \ , 
    \]
    where, for a.e. $t \in [0,T]$, $\rho$ solves
    \begin{equation}\label{eq:fast_graph}
    \rho_t + \int_0^t \onabla \cdot F^\Phi[\mu, \w_s[\rho_s]; \rho_s, V_s[\rho_s]] \dd s = \rho_0 \ ,    
    \end{equation}
to be understood in duality with $C_0(\R^d)$.  
\end{theorem}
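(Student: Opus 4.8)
The plan is to exploit the explicit representation of the weight function, as in Remark~\ref{rem:eta_positive}, to show that the fast relaxation drives $\eta^n_t$ to the quasi-steady state $\w_t[\rho^n_t]$ at rate $\varepsilon_n$, and then to transfer this to $\rho^n$ via the stability computation behind \eqref{eq:stability_rho}. Solving the second line of \eqref{eq:zero_inertia_limit} for a fixed curve $\rho^n$ yields
\[
\eta^n_t = e^{-t/\varepsilon_n}\eta^n_0 + \frac{1}{\varepsilon_n}\int_0^t e^{-(t-s)/\varepsilon_n}\,\w_s[\rho^n_s]\,\dd s .
\]
The decisive step is an integration by parts in $s$, legitimate exactly because of the extra regularity \ref{ass:omega_sobolev_time}--\ref{ass:omega_der_bounded_in_space} and the chain-rule identity for $\partial_s\w_s[\rho^n_s]$ recorded in the remark preceding the statement. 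It gives
\[
\eta^n_t - \w_t[\rho^n_t] = e^{-t/\varepsilon_n}\bigl(\eta^n_0 - \w_0[\rho^n_0]\bigr) - \int_0^t e^{-(t-s)/\varepsilon_n}\,\partial_s\w_s[\rho^n_s]\,\dd s ,
\]
so that, using the hypothesis on the first term, the uniform bound on $\partial_s\w_s[\rho^n_s]$ (finite and independent of $n$ by the preceding remark), and $\int_0^t e^{-(t-s)/\varepsilon_n}\,\dd s \le \varepsilon_n$, one obtains
\[
\delta_n := \sup_{t\in[0,T]}\norm{\eta^n_t - \w_t[\rho^n_t]}_\infty \le \norm{\eta^n_0 - \w_0[\rho^n_0]}_\infty + \overline{C}\,\varepsilon_n \longrightarrow 0 .
\]

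Next I would introduce the limiting density. Equation~\eqref{eq:fast_graph} is itself a nonlocal continuity equation whose weight is the density-dependent functional $\w_s[\rho_s]$; since this weight is bounded by \ref{ass:omega_bounded} and Lipschitz in $\rho$ by \ref{ass:omega_lip}, its well-posedness follows from a contraction argument wholly analogous to Lemma~\ref{lemma:contraction} and Theorem~\ref{thm:well-posedness}, and I take $\rho$ to be its unique solution (assuming, as in the slow case, that the initial data coincide, $\rho^n_0=\rho_0$; otherwise one carries the vanishing boundary term $\norm{\rho^n_0-\rho_0}_{\TV}$). Subtracting the duality formulations of $\rho^n$ and $\rho$ and repeating verbatim the flux estimates that produced \eqref{eq:stability_rho}, now with $(\eta^n,\w_\cdot[\rho_\cdot])$ in place of $(\eta,\widetilde\eta)$ and $(\rho^n,\rho)$ in place of $(\rho,\widetilde\rho)$, together with the uniform bound \eqref{eq:unif_bound_eta_T} on $\eta^n$ and $\norm{\w_\cdot[\rho_\cdot]}_\infty\le C_\w$, gives
\[
\norm{\rho^n_t - \rho_t}_{\TV} \le C_1\int_0^t \norm{\rho^n_s-\rho_s}_{\TV}\,\dd s + C_2\int_0^t \norm{\eta^n_s - \w_s[\rho_s]}_\infty\,\dd s ,
\]
with constants $C_1,C_2$ depending only on $L_\Phi, C_V, L_V, M, C_\w, \norm{\eta^n_0}_\infty$ and $T$.

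Finally the coupling is closed by the triangle inequality and the Lipschitz bound \ref{ass:omega_lip},
\[
\norm{\eta^n_s - \w_s[\rho_s]}_\infty \le \norm{\eta^n_s - \w_s[\rho^n_s]}_\infty + \norm{\w_s[\rho^n_s]-\w_s[\rho_s]}_\infty \le \delta_n + L_\w\norm{\rho^n_s-\rho_s}_{\TV} .
\]
Inserting this into the previous display yields $\norm{\rho^n_t-\rho_t}_{\TV} \le C_2\delta_n T + (C_1+C_2 L_\w)\int_0^t\norm{\rho^n_s-\rho_s}_{\TV}\,\dd s$, whence Grönwall's inequality gives $\sup_t\norm{\rho^n_t-\rho_t}_{\TV}\le C_2\delta_n T\,e^{(C_1+C_2 L_\w)T}\to 0$; feeding this back shows $\sup_t\norm{\eta^n_t-\w_t[\rho_t]}_\infty\to 0$ as well, and the two estimates combine to $d_\infty((\rho^n,\eta^n),(\rho,\w_\cdot[\rho_\cdot]))\to 0$. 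The main obstacle is the integration-by-parts step underlying the first paragraph: one must differentiate $s\mapsto\w_s[\rho^n_s]$ along the solution and control this derivative uniformly in $n$, which is precisely what \ref{ass:omega_sobolev_time}--\ref{ass:omega_der_bounded_in_space} (and the chain-rule computation of the preceding remark) secure; once the $\eta$-estimate is in hand, the rest is a rerun of the Lemma~\ref{lemma:contraction} stability estimate closed by Grönwall.
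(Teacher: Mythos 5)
Your proposal is correct and takes essentially the same route as the paper: the explicit exponential formula for $\eta^n$, an integration by parts justified by \ref{ass:omega_sobolev_time}--\ref{ass:omega_der_bounded_in_space} (and the chain-rule remark) giving the $O(\varepsilon_n)$ relaxation estimate, a Lemma~\ref{lemma:contraction}-type stability bound for $\rho^n-\rho$, and a Gr\"onwall closure. The only differences are organizational: you first isolate $\delta_n=\sup_{t}\norm{\eta^n_t-\w_t[\rho^n_t]}_\infty$ and recover $\norm{\eta^n_t-\w_t[\rho_t]}_\infty$ via the triangle inequality and \ref{ass:omega_lip}, then run Gr\"onwall on $\norm{\rho^n_t-\rho_t}_{\TV}$ alone and feed back, whereas the paper estimates $\norm{\eta^n_t-\w_t[\rho_t]}_\infty$ directly and applies Gr\"onwall to the sum $u^n(t)=\norm{\rho^n_t-\rho_t}_{\TV}+\norm{\eta^n_t-\w_t[\rho_t]}_\infty$.
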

\begin{proof}
    Let us begin by considering the solution to the ODE for the weight function $\eta^n$:
\[
   \eta^n_t(x,y) = e^{-t/\varepsilon_n}\left(\eta^n_0(x,y) + \int_0^t \frac{e^{s/\varepsilon_n}}{\varepsilon_n}\w_s[\rho^n_s](x,y) \dd s\right).
\]
By our assumption~\ref{ass:omega_sobolev_time}, an integration by parts yields
\begin{align*}
  \eta^n_t(x,y) &= e^{-t/\varepsilon_n}\left(\eta^n_0(x,y) + \w_t[\rho^n_t](x,y)e^{t/\varepsilon_n} - \w_0[\rho^n_0](x,y)-  \int_0^t e^{s/\varepsilon_n}\partial_s\w_s[\rho^n_s](x,y) \dd s\right) 
  \\
  & = \eta^n_0(x,y)e^{-t/\varepsilon_n} + \w_t[\rho^n_t](x,y) - \w_0[\rho^n_0](x,y)e^{-t/\varepsilon_n}-  \int_0^t e^{-(t-s)/\varepsilon_n}\partial_s\w_s[\rho^n_s](x,y) \dd s.
\end{align*}
This implies
\begin{align*}
    \norm{\eta^n_t - \w_t[\rho_t]}_\infty & \leq \|\eta_0^n-\w_0[\rho_0^n]\|_\infty e^{-t/\varepsilon_n} + \norm{\w_t[\rho^n_t]- \w_t[\rho_t]}_\infty 
+  \int_0^t e^{-(t-s)/\varepsilon_n}\norm{\partial_s\w_s[\rho^n_s]}_\infty \dd s
    \\
    & \leq L_\w \norm{\rho_t - \rho_t^n}_{TV} + \|\eta_0^n-\w[\rho_0^n]\|_\infty +  \widetilde{C}_\w\varepsilon_n(1-e^{-t/\varepsilon_n})
\end{align*}
where in the last line we have used assumption \ref{ass:omega_der_bounded_in_space}. On the other hand, similar to the proof of Theorem~\ref{prop:slow_convergence}, we can estimate
\begin{align*}
   \norm{\rho^n_t - {\rho}_t}_{{TV}} &\leq   2L_\Phi  C_V {M_\eta} \int_0^t\norm{\rho^n_s - {\rho}_s}_{TV}\dd s
 \\
 & \qquad + 2 L_\Phi C_V M \int_0^t\|\eta^n-\w_s[\rho_s] \|_{ \infty}\dd s
 \\
 &\qquad +2L_\Phi  C_\w  L_VM \int_0^t\norm{\rho^n_s - {\rho}_s}_{TV}\dd s \ ,
\end{align*}
where we denote by $M_\eta:= \sup_{n \in \N}\norm{\eta^n}_{\infty, C_b(\Rddiag)}$.
Let 
\[
u^n(t) :=  \norm{\rho^n_t - {\rho}_t}_{{TV}}+ \norm{\eta^n_t - \w_t[\rho_t]}_\infty,
\]
and note that

\begin{align*}
    u^n(t)\leq \|\eta_0^n-\w_0[\rho_0^n]\|_\infty  + \widetilde{C}_\w\varepsilon_n(1-e^{-t/\varepsilon_n})+{C}\int_0^t u^n(s)\dd s
\end{align*}
for a suitable constant ${C}$ including all the others. By using Gr\"onwall's inequality 
\begin{equation}\label{eq:fast_gronwall}
    \begin{split}
    u^{n}(t)&\le \|\eta_0^n-\w_0[\rho_0^n]\|_\infty\!  +\! \widetilde{C}_\w\varepsilon_n(1-e^{t/\varepsilon_n})\! +\! C \int_0^t(\|\eta_0^n-\w_0[\rho_0^n]\|_\infty\! +\! \widetilde{C}_\w\varepsilon_n(1-e^{s/\varepsilon_n}))e^{C(t-s)} \dd s
    \\
    &\le\left(\|\eta_0^n-\w_0[\rho_0^n]\|_\infty  + \widetilde{C}_\w\varepsilon_n\right)\left(1+CTe^{CT}\right).
    \end{split}
\end{equation}

By taking the supremum over $t \in [0,T]$ and letting $n \to \infty$ yields the result. 
\end{proof}

\begin{remark}
As a byproduct of Theorem \ref{thm:Co_NCL_F} we obtain another existence result for solutions of~\eqref{eq:fast_graph}, i.e., equations of type~\eqref{eq:intro-CE-PDE} with a weight function depending on the mass configuration. This is, indeed, an extension of~\cite{esposito2022class} to a different class of weight functions.
\end{remark}

\section{Discrete-to-continuum limit}\label{sec:discrete_to_continuum}
  
In this section we are interested in considering problem~\eqref{eq:ivp} as the limit of a sequence of discrete finite graphs with an increasing number of vertices, i.e. a sequence of graphs whose base measure is given by a sequence of atomic measures
\begin{equation}\label{eq:atomic_measure}
\mu^n =\sum_{i=1}^n m^n_i\delta_{x_i}\ , \quad  x_i \in \R^d,\ m^n_i \in (0,\infty) \text{ for } i=1,\ldots, n  \text{ for } n\in \N.
\end{equation}  

Let us assume the uniform compressibility condition~\eqref{eq:velocity_bound} for $\mu^n$, i.e., the velocity field $V:[0,T]\times \M_{TV}^M(\R^d) \times \Rddiag \to \V^{\mathrm{as}}(\Rddiag)$ satisfies
\begin{equation}\label{eq:velocity_bound_discrete}
  \sup _{t \in[0, T]} \sup _{\rho \in \M_{TV}^M(\mathbb{R}^d)}\sup_{x\in\R^d} \sum_{\substack{j= 1 \\ x_j\neq x}}^n\left|V_t[\rho](x, x_j)\right|   \leq C_V,
\end{equation}
for a constant $C_V>0$, and the Lipschitz assumption \eqref{eq:ass_Lip_rho}, that is there is a constant $L_V \geq 0$ such that, for all $t \in [0,T]$ and all $\rho, \sigma \in \M_{TV}^M(\R^d)$,  
\begin{equation}\label{eq:ass_Lip_rho_discrete}
         \sup _{x \in \R^d} \sum_{\substack{j= 1 \\ x_j\neq x}}^n \left|V_t[\rho](x, x_j)-V_t[\sigma](x, x_j)\right|   \leq L_V\|\rho-\sigma\|_{\mathrm{TV}} \ .
\end{equation} 
Additionally, if \ref{ass:w_continuous}--\ref{ass:omega_bounded} are satisfied, the system 
\begin{equation} \label{eq:discrete_to_continuum}
\begin{cases}
     \partial_t \rho^n_t = - \onabla \cdot F^\Phi[\mu^n, \eta^n_t ; \rho^n_t,V_t[\rho^n_t]]  
    \\
    \partial_t \eta^n_t(x,y) =\w_t[\rho^n](x,y)- \eta^n_t(x,y),
    \\
     \rho_0 \in \M^M_{TV}(\R^d), \ \eta_0  \in C_{b}(\Rddiag), 
\end{cases}\tag{$\mathrm{Co\!-\!NCL_n}$}
\end{equation}
is well-posed by Theorem \ref{thm:well-posedness} for any $n \in \N$. Under suitable assumptions we will show that~\eqref{eq:ivp} can be obtained as approximation of~\eqref{eq:discrete_to_continuum}, i.e. that \eqref{eq:ivp} is a good mean field approximation for evolution problems on large finite graphs. Throughout the rest of this section we will consider the upwind interpolation for ease of presentation
\[
\Phi_{\text {upwind }}(a, b ; w)=a w_{+}-b w_{-}, \qquad \mbox{ for } (a,b,w)\in\R^3.
\]
Other admissible interpolations can be also considered, see Remark~\ref{rem:final_discrete_to_cont}. We require stronger regularity of the velocity field $V:[0,T]\times \M_{TV}^M(\R^d) \times \R^{2d} \to \V^{\mathrm{as}}(\R^{2d})$, namely, we assume the map  $\R^{2d}\ni (x,y) \mapsto V[\cdot](\cdot,x,y)\in C_0(\R^{2d})$, in view of the weak-* convergence we use below. Note that now the velocity field takes values in the space of antisymmetric vector fields over all of $\R^{2d}$, which does not allow for singularities on the diagonal $\{x=y\}$. Consequently, we also replace \eqref{eq:velocity_bound_discrete} by the slightly stronger condition
\begin{equation}\label{eq:discrete_velocity_fully_bounded}
  \sup _{t \in[0, T]} \sup _{\rho \in \M_{TV}^M\left(\mathbb{R}^d\right)} \sup _{x,y \in \R^{2d}}\left|V_t[\rho](x, y)\right|  \leq C_V. 
\end{equation}
\begin{remark}
    Note that our guiding example for the velocity field, namely $V_t[\rho_t](x,y)=-\onabla (K*\rho_t)(x,y)$
satisfies these assumptions if the kernel $K\in C_0(\R^{2d})$. 
\end{remark}
We also require more regularity of the function $\eta$ to prove the stability result with respect to $\mu$ in Theorem~\ref{thm:discrete-to-continuum} below, i.e., $\eta \in C([0,T], C_0(\R^{2d}))$. Note that it is now well defined on the diagonal and vanishes at infinity. To guarantee this requirement is met, we will need the initial datum $\eta_0 \in C_0(\R^{2d})$ and the function $\w:[0,T]\times \M_{TV}(\Rd)\times \R^{2d} \to \R$ to be such that the map $\R^{2d}\ni (x,y) \mapsto \w_t[\cdot](\cdot,x,y) \in C_0(\R^{2d})$, as well as satisfying the rest of the assumptions in \ref{ass:w_continuous}--\ref{ass:omega_bounded}. 

Below, we present a stability result with respect to the base measure. Note that in case $\mu^n$ is a sequence of atomic measures of the form \eqref{eq:atomic_measure} the next theorem is a discrete-to-continuum limit for~\eqref{eq:ivp}.
\begin{theorem}\label{thm:discrete-to-continuum}
    Fix $\Phi\equiv\Phi_{\text{upwind}}$ and consider a sequence $\{\mu_n\}_{n\in\N} \in\M_{TV}^+(\Rd)$ such that $\mu^n \overset{\ast}{\rightharpoonup} \mu \in \M^+_{TV}(\R^d)$. Let $V:[0,T]\times \M_{TV}^M(\R^d) \times \R^{2d}\to \V^{as}(\R^{2d})$ satisfy assumptions~\eqref{eq:ass_Lip_rho}~and~\eqref{eq:discrete_velocity_fully_bounded}, uniformly in $n$, and $\w:[0,T]\times \M_{TV}^M(\R^d) \times \R^{2d}\to \R^d$ satisfy~\ref{ass:w_continuous}-\ref{ass:omega_bounded}. Assume $((x,y) \mapsto V[\cdot](\cdot,x,y)) \in C_0(\R^{2d})$ and $((x,y)\mapsto \w_t[\cdot](\cdot,x,y)) \in C_0(\R^{2d})$. Let us consider a sequence of solutions $\{(\rho^n,\eta^n)\}_{n\in\N}$ to \eqref{eq:discrete_to_continuum} associated to $\{\mu_n\}$ and let $(\rho,\eta)$ be the solution to $\eqref{eq:ivp}$ depending on $\mu$. If $\norm{\rho_0^n -\rho^0}_{TV}\to 0$ and $\norm{\eta^n_0 - \eta_0}_\infty \to 0$ as $n\to \infty$, then
    \[
    \lim_{n\to \infty} d_\infty((\rho^n,\eta^n),(\rho,\eta)) =0\ .
    \]
\end{theorem}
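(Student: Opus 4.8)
The plan is to run a Gr\"onwall argument on the single quantity $u^n(t):=\norm{\rho^n_t-\rho_t}_{\TV}+\norm{\eta^n_t-\eta_t}_\infty$, exactly as in the proofs of Theorems~\ref{prop:slow_convergence} and~\ref{thm:Co_NCL_F}, showing that $u^n(t)\le a_n+C\int_0^t u^n(s)\,\dd s$ for a constant $C$ independent of $n$ and a sequence $a_n\to0$ collecting the two initial-data errors together with a genuinely new \emph{base-measure error}. The $\eta$-part is immediate: subtracting the two instances of~\eqref{eq:eta_evolution}, taking the sup-norm and using the Lipschitz bound~\ref{ass:omega_lip} on $\w$ gives $\norm{\eta^n_t-\eta_t}_\infty\le\norm{\eta^n_0-\eta_0}_\infty+\int_0^t\big(L_\w\norm{\rho^n_s-\rho_s}_{\TV}+\norm{\eta^n_s-\eta_s}_\infty\big)\dd s$, which already has the desired form.

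For the $\rho$-part I would test the difference of the two weak formulations~\eqref{eq:rho_evolution} against $\varphi\in C_0(\Rd)$ and take the supremum over $\norm{\varphi}_\infty\le1$ to recover $\norm{\rho^n_t-\rho_t}_{\TV}$. The heart of the matter is the flux difference $F^{\Phi}[\mu^n,\eta^n_s;\rho^n_s,V_s[\rho^n_s]]-F^{\Phi}[\mu,\eta_s;\rho_s,V_s[\rho_s]]$. I would insert the intermediate flux $F^{\Phi}[\mu^n,\eta_s;\rho_s,V_s[\rho_s]]$ carrying the discrete base measure but the \emph{limiting} triple $(\eta_s,\rho_s,V_s[\rho_s])$, splitting the difference into two pieces. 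The first piece keeps $\mu^n$ fixed and varies only $(\eta,\rho,V[\rho])$; since a weak-$*$ convergent sequence is norm-bounded, so $\sup_n\norm{\mu^n}_{\TV}<\infty$, and since $V$ satisfies~\eqref{eq:ass_Lip_rho} and~\eqref{eq:discrete_velocity_fully_bounded} uniformly in $n$, this piece is controlled by the very computations that produced~\eqref{eq:stability_rho} in Lemma~\ref{lemma:contraction}, yielding a bound of the form $C\big(\norm{\rho^n_s-\rho_s}_{\TV}+\norm{\eta^n_s-\eta_s}_\infty\big)$ that feeds directly into the Gr\"onwall integral.

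The second, genuinely new piece is the \emph{base-measure swap} $F^{\Phi}[\mu^n,\eta_s;\rho_s,V_s[\rho_s]]-F^{\Phi}[\mu,\eta_s;\rho_s,V_s[\rho_s]]$, in which every coefficient is frozen at its limit and only $\mu^n-\mu$ survives; this is where $\mu^n\overset{\ast}{\rightharpoonup}\mu$ must be used, and it is the main obstacle. Writing out the upwind flux and the nonlocal divergence, this term reduces, after integrating out the base-measure variable, to integrals of the shape $\iint \onabla\varphi(x,y)\,\eta_s(x,y)\,V_s[\rho_s](x,y)_{\pm}\,\dd\rho_s\,\dd(\mu^n-\mu)$. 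The difficulty is structural: $\mu^n\to\mu$ only tests against $C_0$ functions, whereas the $\TV$-norm demands a supremum over all $\norm{\varphi}_\infty\le1$. This mismatch is precisely what the extra regularity $\eta_s\in C_0(\R^{2d})$ and $(x,y)\mapsto V_s[\rho_s](x,y)\in C_0(\R^{2d})$ is designed to bridge, together with the antisymmetry of $V$, which forces it to vanish on the diagonal. I would use the divergence structure of Definition~\ref{def:non_local_grad_div} to rewrite the contribution as a single-variable integral and, for each fixed base-measure point, recognise the inner integrand as a fixed $C_0(\R^{2d})$ test function to which weak-$*$ convergence applies; a dominated-convergence argument against the finite measure $\rho_s$, and a second one in the time variable using the uniform-in-$s$ flux bounds from Proposition~\ref{prop:int_supp_mass_preservation}, would then produce a $\varphi$-independent quantity $a_n^{(\mu)}\to0$.

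Collecting the three estimates gives $u^n(t)\le a_n+C\int_0^t u^n(s)\,\dd s$ with $a_n=\norm{\rho^n_0-\rho^0}_{\TV}+\norm{\eta^n_0-\eta_0}_\infty+a_n^{(\mu)}\to0$, so that Gr\"onwall's inequality followed by $\sup_{t\in[0,T]}$ yields $d_\infty((\rho^n,\eta^n),(\rho,\eta))\to0$. I expect the control of the base-measure swap to be the only delicate step: extracting from the merely weak-$*$ convergence of $\mu^n$ a bound that is uniform in both $t\in[0,T]$ and $\varphi$ is exactly why the upwind interpolation and the $C_0(\R^{2d})$ regularity of $\eta$, $V$ and $\w$ are imposed here, though they played no role in the well-posedness theory of Section~\ref{sec:well_posed}.
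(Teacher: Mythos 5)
Your proposal is correct and is, in architecture, the paper's proof: telescope the flux difference, reuse the Lemma~\ref{lemma:contraction} computations for the pieces that vary $(\rho,\eta,V[\rho])$, isolate a base-measure swap that is killed by weak-$*$ convergence against $C_0(\R^{2d})$ integrands, and close with Gr\"onwall on $u^n(t)=\norm{\rho^n_t-\rho_t}_{\TV}+\norm{\eta^n_t-\eta_t}_\infty$ (your $\eta$-estimate is verbatim the paper's). The one genuine difference is the \emph{order} of the telescoping, and it buys a real simplification. The paper swaps $\mu\mapsto\mu^n$ \emph{after} replacing $\rho_s$ by $\rho^n_s$, so its swap term $II$ pairs $\mu-\mu^n$ with the $n$-dependent measures $\rho^n_s$; to make it vanish the paper must invoke $\norm{\rho^n}_{\infty,\TV}\le M$ and \cite[Theorem 1.59]{ambrosio2000functions} to extract a weakly-$*$ convergent subsequence of $\rho^n_t$, and then upgrade to the full sequence via uniqueness of the solution to~\eqref{eq:ivp}. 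You swap first, freezing the limiting triple $(\eta_s,\rho_s,V_s[\rho_s])$, so your error $a_n^{(\mu)}$ depends on $n$ only through $\mu^n-\mu$, and the compactness/subsequence step and the final uniqueness argument disappear; the remaining pieces close with $n$-uniform constants since $\sup_n\norm{\mu^n}_{\TV}<\infty$ and \eqref{eq:ass_Lip_rho}, \eqref{eq:discrete_velocity_fully_bounded} hold uniformly in $n$. One caveat applies to both arguments equally: because Gr\"onwall runs on the TV norm, the swap term must vanish uniformly over $\norm{\varphi}_\infty\le1$ and $t\in[0,T]$, which weak-$*$ convergence alone does not automatically provide; the paper asserts this via the $C_0$ regularity of the integrands without spelling out the uniformity, and your dominated-convergence sketch is at the same level of detail --- if anything your ordering makes this step cleaner, since the inner function $y\mapsto\int\eta_s(x,y)V_s[\rho_s]_{\pm}(x,y)\,\dd\rho_s(x)$ no longer depends on $n$.
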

\begin{proof}
    We begin with an estimate analogous to the one in the proof of Lemma \ref{lemma:contraction}, with an additional term that accounts for the difference in the base measure between the solutions compared, that is, for any $\varphi \in C_0(\R^d)$ we have
    \begin{align*}
        \bigg|\int_{\Rd}\varphi(x)\rho_s(x) - \int_{\Rd}\varphi(x)\rho^n_s(x)\bigg| & \le \bigg|\int_{\Rd}\varphi(x)\rho_0(x) - \int_{\Rd}\varphi(x)\rho_0^n(x)\bigg|
        \\
        & \qquad +\bigg|\int_0^t \int_{\Rddiag}\frac{\onabla \varphi(x,y)}{2}\bigg(\Phi\left(\frac{\dd\rho_s \otimes \dd \mu}{\dd \lambda}, \frac{\dd \mu \otimes \dd\rho_s}{\dd \lambda} ,V_s[\rho_s]\right)\eta_s 
        \\
        & \qquad - \Phi\left(\frac{\dd\rho^n_s \otimes \dd \mu^n}{\dd \lambda}, \frac{\dd \mu^n \otimes \dd\rho^n_s}{\dd \lambda} , V_s[\rho^n_s]\right)\eta^n_s\bigg) \dd\lambda(x,y) \dd s\bigg| \ .
        \\
        & \leq \bigg|\int_{\Rd}\varphi(x)\rho_0(x) - \int_{\Rd}\varphi(x)\rho_0^n(x)\bigg|
        \\
        &\qquad  +\bigg| \int_0^t\int_{\Rddiag}\frac{\onabla \varphi(x,y)}{2}\eta_s\bigg(\Phi\left(\frac{\dd \rho_s\otimes \dd\mu}{\dd \lambda}, \frac{\dd\mu \otimes \dd\rho_s}{\dd \lambda } ; V_s\left[\rho_s\right]\right) 
        \\
        &\qquad \qquad -\Phi\left(\frac{\dd {\rho}^n_s\otimes \dd\mu}{\dd \lambda }, \frac{\dd\mu \otimes \dd{\rho}^n_s}{\dd \lambda};V_s\left[{\rho}_s\right]\right)\bigg) \dd\lambda\dd s\bigg|
        \\
        &\qquad + \bigg| \int_0^t\int_{\Rddiag}\frac{\onabla \varphi(x,y)}{2}\eta_s\bigg(\Phi\left(\frac{\dd \rho^n_s\otimes \dd\mu}{\dd \lambda}, \frac{\dd\mu \otimes \dd\rho^n_s}{\dd \lambda } ; V_s\left[\rho_s\right]\right) 
        \\
        &\qquad \qquad -\Phi\left(\frac{\dd {\rho}^n_s\otimes \dd\mu^n}{\dd \lambda }, \frac{\dd\mu^n \otimes \dd{\rho}^n_s}{\dd \lambda};V_s\left[{\rho}_s\right]\right)\bigg) \dd\lambda\dd s\bigg|
        \\
        &\quad + \bigg| \int_0^t\int_{\Rddiag}\frac{\onabla \varphi(x,y)}{2}\left(\eta_s-{\eta}^n_s\right) 
        \\
        &  \qquad \qquad  \times \Phi\left(\frac{\dd {\rho}^n_s\otimes \dd\mu^n}{\dd \lambda}, \frac{\dd\mu^n \otimes \dd{\rho}^n_s}{\dd \lambda};V_s\left[{\rho}_s\right]\right) \dd\lambda\dd s\bigg|
        \\
        &\quad +  \bigg| \int_0^t\int_{\Rddiag}\frac{\onabla \varphi(x,y)}{2}{\eta}^n_s\bigg(\Phi\left(\frac{\dd {\rho}^n_s\otimes \dd\mu^n}{\dd \lambda}, \frac{\dd\mu^n \otimes \dd{\rho}^n_s}{\dd \lambda};V_s\left[{\rho}_s\right]\right)
        \\
        & \qquad \qquad -\Phi\left(\frac{\dd {\rho}^n_s\otimes \dd\mu^n}{\dd \lambda}, \frac{\dd\mu^n \otimes \dd{\rho}^n_s}{\dd \lambda};V_s\left[{\rho}^n_s\right]\right)\bigg)  \dd\lambda d s\bigg|
        \\
        & =:I_0 + I + II + III + IV \ . 
    \end{align*}
We can estimate the terms $I, III$ and $IV$ analogously to the proof of Lemma \ref{lemma:contraction} (see also the proof of Theorem  \ref{prop:slow_convergence}), so in the rest of this proof we focus on obtaining a bound for $II$. 
\begin{align*}
II&= \bigg|\int_0^t \int_{\R^{2d}}\frac{\onabla \varphi(x,y)}{2}\eta_s \bigg(\Phi\left(\frac{\dd\rho^n_s\otimes \dd \mu}{\dd \lambda}, \frac{\dd \mu \otimes \dd\rho^n_s}{\dd \lambda} ,V_s[\rho_s]\right)
    \\
    & \qquad \quad - \Phi\left(\frac{\dd\rho^n_s \otimes \dd \mu^n}{\dd \lambda}, \frac{\dd \mu^n \otimes \dd\rho^n_s}{\dd \lambda} , V_s[\rho_s]\right)\bigg) \dd\lambda(x,y)\dd s\bigg|
    \\
    & =\bigg| \int_0^t \int_{\mathbb{R}^{2 d}}\frac{(\varphi(y)-\varphi(x))}{2} \eta_s(x, y)\bigg(V_s\left[\rho_s\right]_+(x, y) d \rho_s^n(x) d (\mu-\mu^n)(y) 
    \\
    & \qquad \qquad  -V_s\left[\rho_s\right]_-(x, y) d (\mu-\mu^n)(x) d \rho_s^n(y)\bigg) \dd s \bigg| 
\end{align*}
\begin{align*}
& \leq \bigg| \int_0^t \int_{\mathbb{R}^{2 d}}\frac{(\varphi(y)-\varphi(x))}{2} \eta_s(x, y)V_s\left[\rho_s\right]_+(x, y) d \rho_s^n(x) d (\mu-\mu^n)(y) \dd s \bigg |
\\
& \qquad + \bigg| \int_0^t \int_{\mathbb{R}^{2 d}}\frac{(\varphi(y)-\varphi(x))}{2} \eta_s(x, y)V_s\left[\rho_s\right]_-(x, y) d(\mu-\mu^n)(x) d \rho_s^n(y) \dd s \bigg|
\end{align*}
\begin{align*}
    & \leq \sup_{t \in [0,T]} \bigg|\int_{\mathbb{R}^{2 d}}\frac{(\varphi(y)-\varphi(x))}{2} \eta_t(x, y)V_t\left[\rho_t\right]_+(x, y) d \rho_t^n(x) d (\mu-\mu^n)(y)\bigg| T
\\
&\qquad +\sup_{t \in [0,T]}  \bigg|  \int_{\mathbb{R}^{2 d}}\frac{(\varphi(y)-\varphi(x))}{2} \eta_t(x, y)V_t\left[\rho_t\right]_-(x, y) d(\mu-\mu^n)(x) d \rho_t^n(y)\bigg|T
\\
& =: II^n_1 + II^n_2.
\end{align*}

As previously mentioned, by means of analogous calculations to those in the proof of Lemma \ref{lemma:contraction} and Theorem \ref{prop:slow_convergence} we now compare $\rho_t$ and $\rho_t^n$ and obtain
\begin{align*}
     \norm{\rho_t - \rho^n_t}_{TV} \leq& \norm{\rho_0 - \rho^n_0}_{TV} + 2 L_\Phi C_V \norm{\eta}_\infty \int_0^t\|\rho_s-{\rho^n_s}\|_{TV}\dd s
 \\
 & +  2 L_\Phi C_V M \int_0^t\|\eta_s-{\eta^n_s} \|_\infty\dd s 
 \\
 & +2  L_\Phi L_VM M_\eta \int_0^t\|\rho_s-{\rho_s^n}\|_{TV}\dd s
 \\
 & + II^n_1 + II^n_2 ,
\end{align*}
where $M_\eta:= \sup_{n \in \N}\norm{\eta^n}_{\infty, C_b(\Rddiag)}$. On the other hand, using the explicit solution for the ODE describing the dynamics of $\eta^n$ and $\eta$ as well as the Lipschitz condition \ref{ass:omega_lip} yields the following estimate
\begin{align*}
    \norm{\eta_t - \eta^n_t}_\infty \leq \norm{\eta_0-\eta^n_0}_\infty + \int_0^t \norm{\eta_s - \eta^n_s}_\infty\dd s + L_\w \int_0^t \norm{\rho_s - \rho^n_s}_{TV}\dd s.
    \end{align*}
Setting
\[
u^n(t) := \norm{\rho_t - \rho^n_t}_{TV} +  \norm{\eta_t - \eta^n_t}_\infty\ , 
\]
an application of Gr\"onwall's inequality yields
\begin{align*}
   u^{n}(t) \leq \big(\norm{\rho_0 - \rho^n_0}_{TV}+\norm{\eta_0-\eta^n_0}_\infty + II^n_1 + II^n_2\big) e^{CT}\ , 
\end{align*}
for a suitable constant $C \in (0,\infty)$ depending on all the others. We now turn our attention to $II^n_1$ and $II^n_2$. Since $\{\rho_t^n\}_{n \in \N}$ is a sequence of solutions, we have that for any $t \in [0,T]$ and any $n \in \N$, $\norm{\rho^n}_{TV,\infty}\leq M$. Then, by \cite[Theorem 1.59]{ambrosio2000functions} we can extract a weakly-* converging subsequence, denoted again, with a slight abuse of notation, by $\{\rho_t^n\}_{n \in \N}$ converging to some $\bar{\rho}_t \in \M_{TV}(\Rd)$. Finally, note that the integrands in $II^n_1$  and $II^n_2$ are both in $C_0(\R^{2d})$ due to our assumptions. Together with the assumption $\mu^n \overset{\ast}{\rightharpoonup} \mu$, we have that, up to a subsequence $II^n_1$  and $II^n_2$ converge to 0 as $n \to \infty$. Having the convergence of the initial conditions by our assumptions, by taking the supremum over $t\in[0,T]$ we let $n \to \infty$, which yields the result up to passing to a subsequence. Using that the limit is the unique solution to~\eqref{eq:ivp}, we infer the convergence holds for the whole sequence.
\end{proof}
\begin{remark}\label{rem:final_discrete_to_cont}
    Theorem~\ref{thm:discrete-to-continuum} can be obtained for other admissible interpolations $\Phi$ that allow to estimate $II\le II_1^n+II_2^n$ in the proof above. For instance, we can have
    \[
        \Phi(a,b;w)=\sum_{i=1}^{M} g_i(w)(\alpha_i a+\beta_i b), \qquad \mbox{for } (a,b,w)\in\R^3,
    \]
    where $M\in\N$, $\alpha_i,\beta_i\in\R$, $g_i$ Lipschitz for any $i=1,\dots,M$. In case of the upwind interpolation $M=2$, $g_1(w)=w_+$, $\alpha_1=1$, $\beta_1=0$, $g_2(w)=w_-$, $\alpha_2=0$, $\beta_2=-1$. Another example would be the arithmetic mean, for which $M=1$, $g_1(w)=w$, $\alpha_1=\beta_1=1/2$, and similarly for other suitable mean multipliers.
\end{remark}

\subsection*{Acknowledgements}
The authors would like to thank José Antonio Carrillo and André Schlichting for fruitful discussions on the contents of the manuscript. The authors were supported by the Advanced Grant Nonlocal-CPD (Nonlocal PDEs for Complex Particle Dynamics: Phase Transitions, Patterns and Synchronization) of the European Research Council Executive Agency (ERC) under the European Union’s Horizon 2020 research and innovation programme (grant agreement No. 883363). AE was also partially supported by the EPSRC grant reference EP/T022132/1. LM is supported by the EPSRC Centre for Doctoral Training in Mathematics of Random Systems: Analysis, Modelling and Simulation (EP/S023925/1).

\bibliographystyle{abbrv}
\bibliography{biblio}

\begin{thebibliography}{10}

\bibitem{Albi2017}
G.~Albi, M.~Burger, J.~Haskovec, P.~Markowich, and M.~Schlottbom.
\newblock {\em Continuum Modeling of Biological Network Formation}, pages
  1--48.
\newblock Springer International Publishing, Cham, 2017.

\bibitem{ambrosio2000functions}
L.~Ambrosio, N.~Fusco, and D.~Pallara.
\newblock {\em Functions of Bounded Variation and Free Discontinuity Problems
  (Oxford Mathematical Monographs)}.
\newblock Oxford University Press, 5 2000.

\bibitem{ayi2023graph}
N.~Ayi and N.~P. Duteil.
\newblock Graph limit for interacting particle systems on weighted random
  graphs.
\newblock {\em Preprint arXiv:2307.12801}, 2023.

\bibitem{Ayi_Pouradier_JDE21}
N.~Ayi and N.~Pouradier~Duteil.
\newblock Mean-field and graph limits for collective dynamics models with
  time-varying weights.
\newblock {\em J. Differential Equations}, 299:65--110, 2021.

\bibitem{BailoCarrilloHu2018}
R.~Bailo, J.~A. Carrillo, and J.~Hu.
\newblock Fully discrete positivity-preserving and energy-dissipating schemes
  for aggregation-diffusion equations with a gradient-flow structure.
\newblock {\em Commun. Math. Sci.}, 18(5):1259--1303, 2020.

\bibitem{Bailo_etal2020}
R.~Bailo, J.~A. Carrillo, H.~Murakawa, and M.~Schmidtchen.
\newblock Convergence of a fully discrete and energy-dissipating finite-volume
  scheme for aggregation-diffusion equations.
\newblock {\em Math. Models Methods Appl. Sci.}, 30(13):2487--2522, 2020.

\bibitem{banasiak2014methods}
J.~Banasiak and M.~Lachowicz.
\newblock {\em Methods of small parameter in mathematical biology}.
\newblock Springer, 2014.

\bibitem{Baumann_al_PRL_20}
F.~Baumann, P.~Lorenz-Spreen, I.~M. Sokolov, and M.~Starnini.
\newblock Modeling echo chambers and polarization dynamics in social networks.
\newblock {\em Phys. Rev. Lett.}, 124:048301, Jan 2020.

\bibitem{porat2023mean}
I.~Ben-Porat, J.~A. Carrillo, and S.~T. Galtung.
\newblock Mean field limit for one dimensional opinion dynamics with coulomb
  interaction and time dependent weights.
\newblock {\em Nonlinear Analysis}, 240:113462, 2024.

\bibitem{berner2023adaptive}
R.~Berner, T.~Gross, C.~Kuehn, J.~Kurths, and S.~Yanchuk.
\newblock Adaptive dynamical networks.
\newblock {\em Physics Reports}, 1031:1--59, 2023.

\bibitem{Bianchi_PME_Graphs_CVPDE}
D.~Bianchi, A.~G. Setti, and R.~K. Wojciechowski.
\newblock The generalized porous medium equation on graphs: existence and
  uniqueness of solutions with $\ell^1$ data.
\newblock {\em Calculus of Variations and Partial Differential Equations},
  61(5):171, 2022.

\bibitem{Burger_kinetic_net22}
M.~Burger.
\newblock Kinetic equations for processes on co-evolving networks.
\newblock {\em Kinet. Relat. Models}, 15(2):187--212, 2022.

\bibitem{Cances2020}
C.~Canc{\`{e}}s, T.~O. Gallouët, and G.~Todeschi.
\newblock A variational finite volume scheme for wasserstein gradient flows.
\newblock {\em Numer. Math.}, 146(3):437--480, Oct. 2020.

\bibitem{CHLZ12}
S.-N. Chow, W.~Huang, Y.~Li, and H.~Zhou.
\newblock Fokker--{P}lanck equations for a free energy functional or {M}arkov
  process on a graph.
\newblock {\em Arch. Ration. Mech. Anal.}, 203(3):969--1008, 2012.

\bibitem{craig2021clustering}
K.~Craig, N.~Garc\'ia~Trillos, and D.~Slep{\v{c}}ev.
\newblock Clustering dynamics on graphs: From spectral clustering to mean shift
  through fokker--planck interpolation.
\newblock In {\em Active Particles, Volume 3: Advances in Theory, Models, and
  Applications}, pages 105--151. Springer, 2021.

\bibitem{DisserLiero2015}
K.~Disser and M.~Liero.
\newblock On gradient structures for {M}arkov chains and the passage to
  {W}asserstein gradient flows.
\newblock {\em Netw. Heterog. Media}, 10(2):233--253, 2015.

\bibitem{EspositoGvalaniSchlichtingSchmidtchen2021}
A.~Esposito, R.~S. Gvalani, A.~Schlichting, and M.~Schmidtchen.
\newblock On a novel gradient flow structure for the aggregation equation.
\newblock {\em Preprint arXiv:2112.08317}, 2021.

\bibitem{EHPS23}
A.~Esposito, G.~Heinze, J.-F. Pietschmann, and A.~Schlichting.
\newblock Graph-to-local limit for a multi-species nonlocal cross-interaction
  system.
\newblock {\em PAMM}, 23(4), 2023.

\bibitem{esposito2023graphtolocal}
A.~Esposito, G.~Heinze, and A.~Schlichting.
\newblock Graph-to-local limit for the nonlocal interaction equation.
\newblock {\em Preprint arXiv:2306.03475}, 2023.

\bibitem{esposito2022class}
A.~Esposito, F.~S. Patacchini, and A.~Schlichting.
\newblock On a class of nonlocal continuity equations on graphs.
\newblock {\em Eur. J. Appl. Math.}, page 1–18, 2023.

\bibitem{esposito2021nonlocal}
A.~Esposito, F.~S. Patacchini, A.~Schlichting, and D.~Slep{\v{c}}ev.
\newblock Nonlocal-interaction equation on graphs: gradient flow structure and
  continuum limit.
\newblock {\em Archive for Rational Mechanics and Analysis}, 240(2):699--760,
  2021.

\bibitem{fagioli2023opinion}
S.~Fagioli and G.~Favre.
\newblock Opinion formation on evolving network. the dpa method applied to a
  nonlocal cross-diffusion pde-ode system.
\newblock {\em Preprint arXiv:2306.10844}, 2023.

\bibitem{Falco_PRE22}
C.~Falc\'o.
\newblock From random walks on networks to nonlinear diffusion.
\newblock {\em Phys. Rev. E}, 106:054103, Nov 2022.

\bibitem{forkertEvolutionaryGammaConvergence2020}
D.~Forkert, J.~Maas, and L.~Portinale.
\newblock Evolutionary {$\Gamma$}-convergence of entropic gradient flow
  structures for {F}okker--{P}lanck equations in multiple dimensions.
\newblock {\em SIAM J. Math. Anal.}, 54(4):4297--4333, 2022.

\bibitem{giga2022graph}
Y.~Giga, Y.~van Gennip, and J.~Okamoto.
\newblock Graph gradient flows : from discrete to continuum.
\newblock {\em Preprint arXiv:2211.03384}, 2022.

\bibitem{Gkogkas_Kuehn_Xu_CMS}
M.~A. Gkogkas, C.~Kuehn, and C.~Xu.
\newblock Continuum limits for adaptive network dynamics.
\newblock {\em Commun. Math. Sci.}, 21(1):83--106, 2023.

\bibitem{HeinzePiSch-2}
G.~Heinze, J.-F. Pietschmann, and M.~Schmidtchen.
\newblock Nonlocal cross-interaction systems on graphs: Energy landscape and
  dynamics.
\newblock {\em Kinet. Relat. Mod.}, 2023.
\newblock \mbox{doi}:10.3934/krm.2023008.

\bibitem{HPS21}
G.~Heinze, J.-F. Pietschmann, and M.~Schmidtchen.
\newblock Nonlocal cross-interaction systems on graphs: nonquadratic
  {F}inslerian structure and nonlinear mobilities.
\newblock {\em SIAM J. Math. Anal.}, 55(6):7039--7076, 2023.

\bibitem{HraivoronskaSchlichtingTse2023}
A.~Hraivoronska, A.~Schlichting, and O.~Tse.
\newblock Variational convergence of the {S}charfetter--{G}ummel scheme to the
  aggregation-diffusion equation and vanishing diffusion limit.
\newblock {\em Preprint arXiv:2306.02226}, 2023.

\bibitem{hraivoronska2023diffusive}
A.~Hraivoronska and O.~Tse.
\newblock Diffusive limit of random walks on tessellations via generalized
  gradient flows.
\newblock {\em SIAM Journal on Mathematical Analysis}, 55(4):2948--2995, 2023.

\bibitem{Kohne2020}
J.~Kohne, N.~Gallagher, Z.~M. Kirgil, R.~Paolillo, L.~Padmos, and F.~Karimi.
\newblock {\em The Role of Network Structure and Initial Group Norm
  Distributions in Norm Conflict}, pages 113--140.
\newblock Springer International Publishing, Cham, 2020.

\bibitem{Koike_JSP22}
H.~Koike, H.~Takayasu, and M.~Takayasu.
\newblock Diffusion-localization transition point of gravity type transport
  model on regular ring lattices and bethe lattices.
\newblock {\em Journal of Statistical Physics}, 186(3):44, 2022.

\bibitem{LauxLelmi2022+}
T.~Laux and J.~Lelmi.
\newblock Large data limit of the mbo scheme for data clustering: Convergence
  of the dynamics.
\newblock {\em Journal of Machine Learning Research}, 24(344):1--49, 2023.

\bibitem{Maas11}
J.~Maas.
\newblock Gradient flows of the entropy for finite {M}arkov chains.
\newblock {\em J. Funct. Anal.}, 261(8):2250--2292, 2011.

\bibitem{Medv14_SIAM}
G.~S. Medvedev.
\newblock The nonlinear heat equation on dense graphs and graph limits.
\newblock {\em SIAM Journal on Mathematical Analysis}, 46(4):2743--2766, 2014.

\bibitem{Medv19}
G.~S. Medvedev.
\newblock Correction to: {T}he nonlinear heat equation on {$W$}-random graphs.
\newblock {\em Arch. Ration. Mech. Anal.}, 2ß31(2):1305--1308, 2019.

\bibitem{Mielke2011gradient}
A.~Mielke.
\newblock A gradient structure for reaction-diffusion systems and for
  energy-drift-diffusion systems.
\newblock {\em Nonlinearity}, 24(4):1329--1346, 2011.

\bibitem{paul2022microscopic}
T.~Paul and E.~Trélat.
\newblock From microscopic to macroscopic scale equations: mean field,
  hydrodynamic and graph limits.
\newblock {\em Preprint arXiv:2209.08832}, 2022.

\bibitem{rudin1987complex}
W.~Rudin.
\newblock {\em Real and complex analysis}.
\newblock McGraw-Hill Book Co., New York, third edition, 1987.

\bibitem{shi2019dynamics}
G.~Shi, C.~Altafini, and J.~S. Baras.
\newblock Dynamics over signed networks.
\newblock {\em SIAM Review}, 61(2):229--257, 2019.

\bibitem{SlepcevWarren2022}
D.~Slep{\v c}ev and A.~Warren.
\newblock Nonlocal wasserstein distance: metric and asymptotic properties.
\newblock {\em Calculus of Variations and Partial Differential Equations},
  62(9):238, 2023.

\end{thebibliography}

\end{document}